\documentclass[11pt,a4paper,twoside]{amsart}
\usepackage{amssymb}
\usepackage{mathtools}
\usepackage{graphics}
\usepackage{bm}
\usepackage{tikz}
\usepackage{xstring}
\usepackage{wasysym}
\usepackage{mathdots} 
\usepackage{rotating}
\usepackage{todonotes}
\usepackage{youngtab}
\usepackage{tikz}
\usetikzlibrary{calc}

\newtheorem{theorem}{Theorem}[section]
\newtheorem{lemma}[theorem]{Lemma}
\newtheorem{proposition}[theorem]{Proposition}

\newtheorem{conjecture}[theorem]{Conjecture}
\theoremstyle{remark}
\newtheorem{remark}[theorem]{Remark}
\newtheorem{example}[theorem]{Example}

\renewcommand\P{\operatorname{\mathbb P{}}}

\newcommand{\refT}[1]{Theorem~\ref{#1}}

\newcommand{\refP}[1]{Proposition~\ref{#1}}

\newcommand{\refS}[1]{Section~\ref{#1}}

\newcommand{\vac} {\ensuremath{\cdot}}
\newcommand{\occ} {\ensuremath{\ \bigcirc}}

\newcommand{\con}{\mbf{q}}

\newcommand{\mbf}{\mathbf}
\newcommand{\mbb}{\mathbb}

\newcommand{\SD}{\Xi}

\newcommand\blfootnote[1]{%
  \begingroup
  \renewcommand\thefootnote{}\footnote{#1}%
  \addtocounter{footnote}{-1}%
  \endgroup
}

\begin{document}
\title{Continuous multi-line queues and TASEP}
\author{Erik Aas}
\address{Department of Mathematics, KTH - Royal Institute of Technology \\
SE-100 44 Stockholm, Sweden}
\email{eaas@kth.se}
\author{Svante Linusson}
\email{linusson@kth.se}
\date{}

\begin{abstract}
In this paper, we study a distribution $\SD$ of labeled particles on a continuous ring. It arises in three different ways, all related to the multi-type TASEP on a ring. We prove formulas for the probability density function for some 
permutations and give conjectures for a larger class. We give a complete conjecture for the probability of two particles 
$i,j$ being next to each other on the cycle, for which we prove some cases. We also find that two natural events 
associated to the process have exactly the same probability expressed as a Vandermonde determinant. 
It is unclear whether this is just a coincidence or a 
consequence of a deeper connection.
\end{abstract}

\keywords{TASEP, exclusion process,  multiline queue}
\subjclass[2000]{60K35,82C22}
\maketitle
\section{Introduction}

\blfootnote{The authors acknowledge financial support from the Swedish research council, grant 621-2009-5864.}

In this paper, we study a distribution $\SD$ of labeled particles on a continuous ring. We call this distribution continuous TASEP on a ring. It arises in three different ways:
\begin{enumerate}
\item{} As the limit of the stationary distribution of the totally asymmetric exclusion process (TASEP) on a ring.
\item{} As the projection to the last row of a random continuous multiline queue.
\item{} As the stationary distribution of the so-called (continuous) process of the last row.
\end{enumerate}

Exact definitions are given in \refS{S:Back}. The equivalence of these three descriptions in the discrete case follows from the seminal work by Ferrari and Martin \cite{FM2}. 
The first two explicitly and the third implicitly, as described in Section \ref{S:last_row}.
The limit of the TASEP considered here keeps the number of jumping particles constant and lets the number of vacant positions tend to infinity. 

A large number of TASEP's have been studied, from probabilistic, combinatorial and physical viewpoints. 
In previous works by several authors e.g. \cite{AS, AM,AL,FM2, Lam, LW}, the discrete version of the TASEP studied here has been proven to have many remarkable properties and to be connected to other objects: the shape of random $n$--core partitions, random walks in an affine Weyl group, and multiline queues.
In the present paper we study properties of the limit distribution $\SD$ and find some unexpectedly nice properties of it.
For simplicity, most of our results are stated for the case when the $n$ particles are labeled by $\{1,\dots,n\}$, corresponding to a cyclic permutation.

\subsection{Results and conjectures}
If we condition on the permutation $\pi$ of the distribution we can in some special cases give an exact description of the density function $g_{\pi}$ of how the particles are located on the circle, see \refS{S:Densities}.
For the reverse permutation $w_0=n\dots321$, the density function is the Vandermonde determinant, see Theorem \ref{T:w0}.
For a class of other permutations the density function is, mostly conjecturally, an explicit linear combination of derivatives of the Vandermonde determinant. See \refT{T:Cont12} and Conjectures \ref{Conj:Pdfoneaway} and \ref{Conj:Pdfmanyaway}.
In the cases we can prove, we first prove an exact formula for the discrete case with a given number of empty sites and then take the limit, see Section \ref{S:Disc chain}.
An interesting observation is that the density functions $g_\pi$ in several cases satisfy Laplace's equation.
In general, it seems difficult to give a closed formula for the probability that the particles form a given permutation 
and we have no general conjecture.
However, we give a closed formula for the probability for $w_0$, see Theorem \ref{T:ExactProb}.

Despite the difficulty we had in understanding the probability of a permutation, it seems to be within reach to study certain two-point correlations corresponding to adjacency in the permutation, that is, the probability of two given labeled particles being next to each other. The corresponding
 two point correlation turned out to be important in the discrete case \cite{AAV,AL2}, and has been the second focus of our study. In \refS{S:Correlations} we present a tantalizing pattern for this correlation that we formulate as a general conjecture.
We prove some special cases of the conjecture. 

In \refS{S:initial} we study a third problem, namely the probability that $k$ particles adjacent to each other form a descending sequence in the discrete chain. This is proven, \refT{T:initial}, to be the same Vandermonde determinant 
as in \refT{T:w0}, which settles Conjecture 8.1 in \cite{AL2}. 
We don't know if these two different probabilities expressed as a Vandermonde determinant is just a coincidence, or if there is a deeper connection. See Remark \ref{R:5} for a discussion.

\medskip
\noindent
{\bf Acknowledgement} We thank Andrea Sportiello for asking good questions. We also thank the anonymous referees for numerous improvements on the presentation of the paper.


\section{Background and definitions}\label{S:Back}

\subsection{Multi-type TASEP on a ring} Consider a vector $\mbf {m}=(m_1, \dots, m_n)$ of positive integers and a ring with $N$ sites (we assume that $N \ge \sum m_i$), labeled $0,\dots,N-1 \pmod{N}$ from left to right starting at some particular site $0$. A state of the $\mbf{m}$-TASEP chain is an assignment of $\sum_im_i$ labeled particles to positions on the ring such that no two particles are on the same site.
Exactly $m_i$ of the particles are labeled $i$.
The dynamics of the chain is defined as follows. A particle is chosen uniformly at random and if the site to left of the particle is empty or contains a particle with larger label then the two sites swap. 
\[\underline{\phantom{i} }\ \underline{i} \to \underline{i}\ \underline{\phantom{i} }\quad \text{ and }\quad 
\underline{k}\ \underline{i} \to \underline{i}\ \underline{k} \quad \text{ if $k>i$ }\ \]
(Thus we can think of vacancies as particles labeled $n+1$, but the notation becomes simpler by not doing so.)
It is important to note that all particles less than $i$ "look the same" to $i$ (as do all larger than $i$). This observation is called the projection principle. So, for instance, if one is interested only in the behavior of class $i$ particles, one can instead study the chain where all particles less than $i$ have the same class $1$, all class $i$ particles have class $2$, and the rest class $3$. 

Exclusion processes have been studied extensively in general, and this $\mbf{m}$-TASEP has been considered by several authors \cite{AS, Angel, AL2, EFM, FM1, FM2, Lam, LM}. Both Matrix Ansatz solutions and more combinatorial solutions have been suggested. 

Let $\SD_\mbf{m}(N)$ be the stationary distribution of the TASEP. We define $\SD_n$ as the limit of $\SD_\mbf{m}$ when $\mbf{m} = (\underbrace{1, \dots, 1}_n)$ is fixed and $N$ tends to infinity, while scaling the ring to have length 1 (this will be made precise below). Note that we define a limit of stationary distributions and not the limit of the TASEP itself. We leave that as an interesting challenge.


\subsection{Multiline queues}

We will make extensive use of \emph{multiline queues} (MLQ's), originally defined by Ferrari and Martin \cite{FM2}. We distinguish between \emph{discrete} MLQ's and \emph{continuous} MLQ's.

A {\it discrete MLQ} of type $\mbf{m} = (m_1, \dots, m_n)$ is an $n \times N$ array, with $m_1 + \dots +m_i$ boxes in row $i$ for $1 \leq i \leq n$. We label the rows $1, \dots, n$ from top to bottom.
Given such an array, there is a labeling procedure which assigns a label to each box. See Figure \ref{fi_1} for an example. We label the boxes row by row from top to bottom. Suppose we have just labeled row $i$. Pick any order of the boxes in row $i$ such that boxes with smaller label come before boxes with larger label. Now go through the boxes in this order. When considering a box labeled $k$, find the first unlabeled box in row $(i+1)$, going weakly to the right (cyclically) from the column of the box, and label that box $k$. When this is done, some boxes (in total $m_{i+1}$) remain unlabeled in row $i+1$. Label these $i+1$. Thus all boxes in the first row are labeled $1$.
By {\it $k$-bully path} we mean the path of a label $k$ from its starting position in row $k$ directly down and then along the $k+1$st row to its box with label $k$, and so forth all the way down to the bottom row. If two $k$-bully-paths are arriving at the same label $k$ it is not well defined which one turns downwards and which one continues on the same row, but it will not matter for our purposes.

Note that we can alternatively label the boxes by finding the $k$-bully paths for $k = 1, \dots, n$ in order. For example, if $m_i = 1$, the $1$-bully path is obtained by always taking the next box weakly cyclically to the right of the current box, starting with the unique box on row $1$.

A {\it continuous MLQ} of type $\mbf{m}$ is a sequence of $n$ "continuous" rows with $m_1+\dots+m_i$ boxes in row $i$. In this case we consider the location of the boxes to be numbers in the continuous interval $[0,1)$. The location of the boxes are chosen uniformly at random in each row such that two boxes cannot have the exact same position. We label the boxes using the same labeling procedure as for discrete MLQ's. 

The distribution on the last row (in fact, on any row) in a continuous MLQ is the limit of the corresponding distribution for discrete MLQ's. We will use this in the proofs in Section \ref{S:Densities}.

\begin{figure}
\begin{tikzpicture}[scale=1]
\matrix [column sep=0.1cm, row sep = 0.1cm, ampersand replacement=\&] 
{
\node {$\vac$}; \& \node{$\vac$}; \& \node(a1) {$\occ_{\ \,}$}; \& \node {$\vac$}; \& \node {$\vac$}; \& \node {$\vac$}; \& 
\node(a21) {$\vac$}; \& \node {$\vac$};  \& \node {$\vac$}; \\
\node {$\vac$}; \& \node {$\vac$}; \& \node(a2) {$\vac$}; \& \node {$\vac$}; \& \node {$\vac$}; \& \node(a3) {$\occ_{\ \,}$}; \& 
\node(a22) {$\occ_{\ \,}$}; \& \node(b1) {$\occ_{\ \,}$}; \& \node {$\vac$}; \\
\node(b3) {$\occ_{\ \,}$}; \& \node(c1){$\occ_{\ \,}$}; \& \node {$\vac$}; \& \node(c21) {$\occ_{\ \,}$};\& \node {$\vac$}; \& 
\node{$\vac$}; \& \node(a23)  {$\occ_{\ \,}$};\& \node(b2) {$\vac$}; \& \node(a24) {$\occ_{\ \,}$};\\ 

\node(a26) {$\occ_{\ \,}$}; \& \node(b4){$\vac$}; \& \node {$\occ_{\ \,}$}; \& \node(c3) {$\occ_{\ \,}$}; \&\node(c23) {$\occ_{\ \,}$}; \&
\node {$\occ_{\ \,}$};\& \node(d1) {$\occ_{\ \,}$}; \& \node(d21) {$\occ_{\ \,}$}; \& \node(a25) {$\vac$};\\
\node(a27) {$\occ_{\ \,}$}; \& \node(b5){$\occ_{\ \,}$}; \& \node {$\occ_{\ \,}$}; \& \node(c4) {$\occ_{\ \,}$}; \& \node(c24) {$\occ_{\ \,}$};\&
\node(a4) {$\occ_{\ \,}$};\& \node(d2) {$\occ_{\ \,}$}; \& \node(d22) {$\occ_{\ \,}$}; \& \node {$\occ_{\ \,}$};\\
};
\end{tikzpicture}
\caption{A discrete multiline queue with ${\bf m}=(1,2,2,2,2)$.
}
\label{fi_1}
\end{figure}
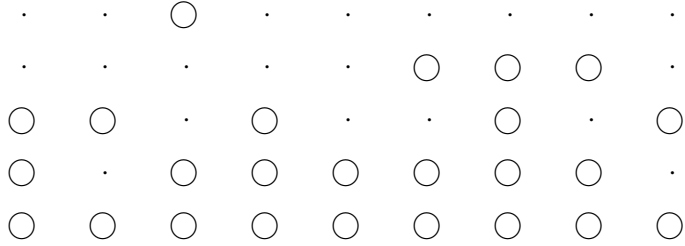


\begin{figure}
\begin{tikzpicture}[scale=1]
\matrix [column sep=0.1cm, row sep = 0.1cm, ampersand replacement=\&] 
{
\node {$\vac$}; \& \node{$\vac$}; \& \node(a1) {$\occ_{1}$}; \& \node {$\vac$}; \& \node {$\vac$}; \& \node {$\vac$}; \& 
\node(a21) {$\vac$}; \& \node {$\vac$};  \& \node {$\vac$}; \\
\node {$\vac$}; \& \node {$\vac$}; \& \node(a2) {$\vac$}; \& \node {$\vac$}; \& \node {$\vac$}; \& \node(a3) {$\occ_1$}; \& 
\node(a22) {$\occ_2$}; \& \node(b1) {$\occ_2$}; \& \node {$\vac$}; \\
\node(b3) {$\occ_2$}; \& \node(c1){$\occ_3$}; \& \node {$\vac$}; \& \node(c21) {$\occ_3$};\& \node {$\vac$}; \& \node(a12) {$\vac$};\& 
\node(a23) {$\occ_1$}; \& \node(b2) {$\vac$}; \& \node(a24) {$\occ_2$};\\ 

\node(a26) {$\occ_2$}; \& \node(c12) {$\phantom{\vac}$}; \& \node(b4){$\occ_2$}; \& \node(c3) {$\occ_3$}; \&\node(c23) {$\occ_3$}; \&\node(d1)  {$\occ_4$};\& 
\node{$\occ_1$}; \& \node(d21) {$\occ_4$}; \& \node(a25) {$\vac$};\\
\node(a27) {$\occ_2$}; \& \node{$\occ_5$}; \& \node(b5) {$\occ_{2}$}; \& \node(c4) {$\occ_3$}; \& \node(c24) {$\occ_3$};\&\node(d2)  {$\occ_4$};\& 
\node(a4){$\occ_1$}; \& \node(d22) {$\occ_4$}; \& \node {$\occ_{5}$};\\ 
};

\draw [-,thick,red] (a1.center) -- (a2.center) -- (a3.center) -- (a12.center)--(a23.center)--(a4.center);
\draw[-,thick,blue] (a22.center) -- (a23.center) -- (a24.center) -- (a25.center)--(5,-0.7); 
\draw [-,thick, blue] (-5,-0.75) -- ([yshift=-1pt] b4.center) -- (b5.center);
\draw [-,thick, blue] (b1.center) -- (b2.center) -- (5,0.05);
\draw [-,thick, blue] (-5,0) -- (b3.center)  -- (a27.center);
\draw [-,thick,green] (c1.center) -- ([yshift=1pt] c12.center) -- ([yshift=1pt] c23.center) -- (c24.center);
\draw [-,thick,green] (c21.center) -- (c4.center);
\draw [-,thick] (d1.center) -- (d2.center);
\draw [-,thick] (d21.center) -- (d22.center);
\end{tikzpicture}
\caption{The bully paths and the labeling of the multiline queue in Figure \ref{fi_1}.}
\label{fi_2}
\end{figure}
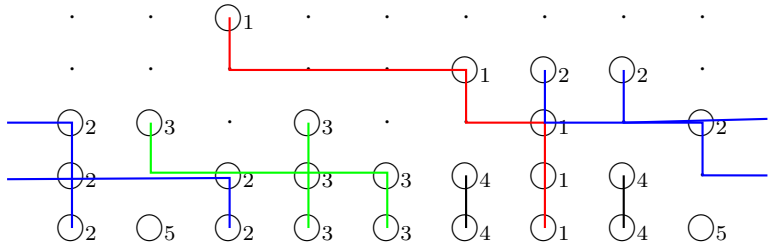

%

\begin{figure}
\begin{tikzpicture} 
\node[circle,draw=black] at (2.3,1){3}; 
\node[circle,draw=black] at (4.7,1){1}; 
\node[circle,draw=black] at (6.8,1) {2};
\node[circle,draw=black] at (8.0,1) {1};
\draw[help lines](0.5,1.5)--(8.5,1.5);
\node[circle,draw=black] at (3.0,2) {1};
\node[circle,draw=black] at (3.6,2){2};
\node[circle,draw=black] at (7.2,2){1};
\draw[help lines](0.5,2.5)--(8.5,2.5);
\node[circle,draw=black] at (2.7,3){1};
\node[circle,draw=black] at (5.1,3){1};
\draw [-,thick,red] (2.7,3)--(2.7,2)--(3.0,2)--(3.0,0.95)--(4.7,0.95);
\draw [-,thick,red] (5.1,3)--(5.1,2)--(7.2,2)--(7.2,1)--(8.0,1);
\draw [-,thick,blue] (3.6,2)--(3.6,1)--(6.8,1);
\end{tikzpicture}
\caption{A continuous multiline queue.}
\label{fi_3}
\end{figure}
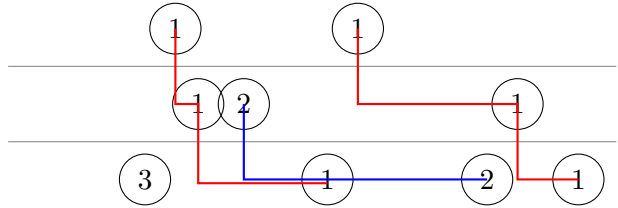

The motivation for these definitions is the following theorem.

\begin{theorem}[Ferrari-Martin]
\label{th_FM}
Let $X$ be an $\mbf{m}$-TASEP distributed word. Then at stationarity
\[
\mbb{P}[X = u] = \frac{n_u}{\prod_{i=1} ^r \binom{n}{m_1 + \dots + m_i}},
\]
where $n_u$ is the number of $\mbf{m}$-MLQ's whose bottom row is labeled $u$.
\end{theorem}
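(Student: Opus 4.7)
The plan is to realize the identity as the projection of a stationary measure on MLQs. I would (i) equip the set of $\mbf{m}$-MLQs with a Markov chain whose stationary distribution is uniform, and (ii) show that the induced chain on the bottom-row labelling coincides with the $\mbf{m}$-TASEP. Since the number of MLQs of type $\mbf{m}$ equals $\prod_{i=1}^n \binom{N}{m_1+\dots+m_i}$ (choose the box positions in each row), the uniform distribution assigns mass $1/\prod_{i=1}^n \binom{N}{m_1+\dots+m_i}$ to each MLQ, and its push-forward to the bottom row assigns mass $n_u$ times this quantity to the word $u$, which is the stated formula.

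For the MLQ dynamics I would pick a column $c\in\{0,\dots,N-1\}$ uniformly at random and, independently in each row, slide the box at column $c$ one step left (cyclically) if column $c-1$ is unoccupied. Row $i$ in isolation then evolves as an ordinary single-species TASEP on the cycle with $m_1+\dots+m_i$ particles in $N$ sites, whose unique stationary distribution is uniform on the $\binom{N}{m_1+\dots+m_i}$ placements. Because the $n$ rows evolve independently, the product uniform measure on MLQs is automatically stationary for the joint chain.

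The main obstacle is step (ii), the projection property. I would prove it by a row-by-row analysis of how the bully paths are re-routed when the shift is performed in column $c$. The key lemma is that, after the shift and the induced relabelling, the bottom-row word differs from its pre-shift value only at columns $c-1$ and $c$, and the local change is exactly the transition prescribed by the $\mbf{m}$-TASEP rule (nothing, a left jump into a vacancy, or a swap of a smaller label into the site of a larger one). The delicate case is when the box being slid in some intermediate row was itself the terminus of a bully path arriving from above: one has to verify that the incoming path simply re-aims at the new position of the box without disturbing any label further down, and conversely that a bully path departing from that box is shifted consistently. I expect the analysis to split into a handful of sub-cases depending on whether columns $c-1$ and $c$ of the row below carry boxes and on the relative order of the labels involved; the argument is combinatorial but essentially local.

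Once the projection is in hand, the conclusion is immediate: starting the MLQ chain from uniform, the bottom-row projection at any later time is $\mbf{m}$-TASEP distributed, so the unique stationary distribution of the $\mbf{m}$-TASEP is the push-forward of the uniform measure on MLQs, yielding the formula. The heart of the proof is the local bully-path rerouting lemma; everything else is bookkeeping around it.
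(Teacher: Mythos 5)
Your overall architecture --- endow the set of $\mbf m$-MLQs with a Markov chain whose stationary law is uniform and show that the bottom-row labelling lumps to the $\mbf m$-TASEP --- is exactly that of Ferrari and Martin (the paper gives no proof of this theorem; it is quoted from \cite{FM2}). The gap is that the dynamics you actually propose, namely choosing a column $c$ and letting every row independently slide its box at $c$ one step left when $c-1$ is free, has neither of the two properties your argument needs. Stationarity of the uniform measure is not automatic: the rows are not independent processes (they share the random column $c$), and since each single-column update $T^{(k)}_c$ is a non-injective deterministic map, the average over $c$ of the product maps is not the product of the averaged maps. Already for $N=2$ with one box in each of two rows, $T_0$ sends every joint configuration to the state with both boxes at site $1$ and $T_1$ sends every joint configuration to the state with both at site $0$, so the stationary measure charges only two of the four MLQs instead of being uniform.

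More seriously, the key rerouting lemma is false for these dynamics, and it cannot be repaired by a finer case analysis because the rerouting is genuinely non-local. Take $N=3$, $\mbf m=(1,1)$, with the row-$1$ box at column $1$ and the row-$2$ boxes at columns $0$ and $2$; writing the vacancy as a particle of class $3$, the bottom word is $231$. Ringing the bell at column $c=1$ moves only the row-$1$ box (to column $0$); its bully path now claims the row-$2$ box at column $0$ instead of the one at column $2$, and the bottom word becomes $132$. The word has changed at columns $0$ and $2$, not at $\{c-1,c\}=\{0,1\}$, and $231\to 132$ is not a legal $\mbf m$-TASEP transition (the only legal move out of $231$ is to $213$). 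What makes Ferrari and Martin's proof work is precisely the construction of a coupled dynamics: the update cascades from the top row downward, and the move performed in row $k+1$ depends on whether, and from where, a box moved in row $k$ (this is the $M/M/1$ queueing picture in \cite{FM2}). That coupling is simultaneously what makes the product-uniform measure stationary and what makes the bottom row lump to the multi-type TASEP; it is the actual content of the theorem and is the piece missing from your proposal.
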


From this theorem it follows that $\SD_n$ is the distribution of the labels on the bottom row for a uniformly chosen continuous MLQ. 


\subsection{The process of the last row}
\label{S:last_row}

Theorem \ref{th_FM} can easily be extended to the case where $m_n = 0$, as we now explain. Consider a $(m_1, \dots, m_{n-1}, 0)$-MLQ. By the theorem, the labeling of row $n-1$ has TASEP distribution. Using the proof of \cite{FM2}, it is easy to show that row $n$ also has TASEP distribution, \emph{of the same type}. It follows that the probabilistic map from the $(n-1)$st row to the $n$th row represents another Markov chain with the same distribution! Here is an example of how we will use this fact. Consider Figure \ref{fi_polr}. In the top row we have sampled a word from the TASEP distribution, and on the second row we have selected 4 positions for the boxes, uniformly at random. Then we use the same labeling procedure as before. The claim, then, is that the bottom row also has TASEP distribution. 

Formally, the discrete process of the last row is a Markov chain where the states are assignments of labeled particles to positions, $m_i$ particles labeled $i$ for each $i$, on the circle $Z_{N}$. The transitions are uniformly random assignments of positions of $\sum_i m_i$ boxes on a row below and they are labeled with the same labeling procedure as for multiline queues. The unique stationary distribution of this process thus coincides with the stationary distribution of the $\mbf{m}$-TASEP and the bottom row of a uniformly random $\mbf{m}$-MLQ. 

The continuous process of the last row is then obtained by letting $N\to \infty$ (while scaling the circle to unit length), which similarly gives the same stationary distribution as the continuous MLQ of type $\bf m$.

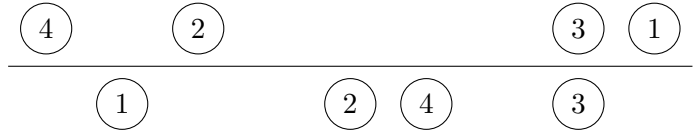
\begin{figure}
\begin{tikzpicture} 
\node[circle,draw=black] at (1,2) {4}; 
\node[circle,draw=black] at (3,2) {2}; 
\node[circle,draw=black] at (8,2) {3};
\node[circle,draw=black] at (9,2) {1};
\draw(0.5,1.5)--(9.5,1.5);
\node[circle,draw=black] at (2,1) {1};
\node[circle,draw=black] at (5,1) {2};
\node[circle,draw=black] at (6,1) {4};
\node[circle,draw=black] at (8,1) {3};
\end{tikzpicture}
\caption{The bottom row is the output of the process of the last row. The top row together with the positions of the boxes in the second row are given.}
\label{fi_polr}
\end{figure}

\section{Discrete and continuous density functions}\label{S:Densities}

Recall that in the definition of $\SD_n$ we have taken $\mbf{m}=(1,\dots,1)$, so the particles form a permutation. We now define the key quantities of interest.

\begin{enumerate}
\item [(a)] For a permutation $\pi$, the number $G_\pi (b_1, \dots, b_n; N)$ of discrete MLQ's of length $N$ such that the labels of the boxes in the bottom row are $\pi_1, \dots, \pi_n$, at positions $b_1 < \dots < b_n$. For fixed $N$, summing $G_\pi (b_1, \dots, b_n; N)$ over all permutations $\pi$ and all increasing sequences $b_1 < \dots < b_n$, we get $Z_N = \binom N1\binom N2 \dots \binom Nn$, the total number of discrete MLQ's.

\item [(b)] We get the corresponding continuous probability density function for $0<q_1<\dots<q_n<1$ as a limit:
\[
g_\pi(q_1, \dots, q_n) = 
\lim_{\delta\to 0} \delta^{-n}
\lim_{N\to\infty} \frac{1}{Z_N} 
\sum_{(x_1,\dots,x_n) \in  \mathbb{Z}^n \cap \prod_i [q_iN, (q_i+\delta)N)}
G_\pi(x_1, \dots, x_n; N).
\]


\item [(c)] The probability $p_\pi = \int_{0 < q_1 < \dots < q_n < 1}  g_\pi(q_1, \dots, q_n) dq_1\dots dq_n$ that the letters at the bottom of a random continuous MLQ form the permutation $\pi$.\\
\end{enumerate}
\begin{remark}\label{R:count}
The probability $p_\pi$ can also be obtained by a finite computation as follows. With probability $1$, the horizontal positions of all boxes in the 
MLQ are unique, and can hence be ordered from right to left, by numbering them from $1$ to $\binom{n}{2}$. Such a numbering we call a {\it 
placement} $(a_{ij})$ where $a_{ij}$ is the label given to the $j$th box from the right in the $i$th row (from the top, as always). Since we always 
have $a_{i(j+1)} > a_{ij}$, clearly there are ${\binom{\binom{n+1}{2}}{1,\dots,n}}$ unique placements for a $(1, \dots, 1)$-MLQ. The placement 
determines the labeling of the bottom row. Therefore, we have $p_\pi = \frac{k}{\binom{\binom{n+1}{2}}{1,\dots,n}}$, where $k$ is the number of 
placements for which the bottom row is labeled $\pi$.

A refined version of this observation allows for computing the polynomials $g_\pi$. We have used this for computing examples, but not in the proofs.
\end{remark}

\subsection{Probability density functions for the continuous chain}\label{S:ProbDensity}
As an example, consider the case when $n=2$. Which permutation appears depends solely on whether or not the box in row 1 is between the two boxes in row 2 or not. This gives us directly $g_{21}(q_1,q_2)=q_2-q_1$ and $g_{12}(q_1,q_2)=1-(q_2-q_1)$.
The complexity of this direct approach grows very quickly but we have been able to compute it using a computer for all permutations when $n\le 5$. For some special classes of permutations $\pi$, we have been able to use the results on the discrete chain, see Section \ref{S:Disc chain}, to understand the probability density function $g_\pi$ for the continuous distribution $\SD_n$ when the particles form the permutation $\pi$. 


Our first result is a formula for $p_{w_0}$ where $w_0 = n(n-1)\dots 1$ is the reverse permutation.

\begin{theorem} \label{T:w0}
For any $n\ge 2$ and $0\le q_1<\dots<q_n<1$ we have 
\[
	g_{w_0}(q_1,\dots,q_n)=n! \prod_{1\le k<l\le n} (q_l-q_k)
\]
\end{theorem}
\begin{proof}
By setting $q_i=b_i/N$ in the formula in definition (b) and letting $N\to\infty$ in \refP{P:w0}, the result follows with an easy calculation.
\end{proof}

We note that a similar formula is proven in \cite{AAV} for the TASEP speed process, namely for the distribution function of the "TASEP speeds" $(U_1, \dots, U_n)$ conditioned on $U_1 > \dots > U_n$ (so that particles $1, \dots, n$ never cross). We cannot see any closer similarity between our process and the speed process than that in these two particular cases, the computation boils down to counting the same type of MLQ's (which in turn are closely related to Gelfand-Tsetlin patterns).

Next, we study the permutations $s_kw_0=n\dots (k+2) k (k+1) (k-1)\dots 21$, where the numbers $k$ and $k+1$ have switched places in $w_0$. 
We have the following result for $k=1,2$, which was surprisingly difficult to prove.
As above, the proof follows by setting $q_i=b_i/N$ and letting $N\to\infty$ in a corresponding statement for the discrete chain \refT{T:k=1,2}.

\begin{theorem}\label{T:Cont12}
For any $0\le q_1<\dots<q_n<1$, we have 
\begin{align*}
	g_{s_1w_0} &= \left(\frac{\partial}{ \partial q_n} - 1 \right) g_{w_0}, \quad &\text {for $n\ge 2$}\\
	g_{s_2w_0} &= \left(\frac{1}{2}\frac{\partial^2}{\partial q_{n-1}\partial q_n} - 1 \right) g_{w_0},\quad &\text {for $n\ge 3$}.
\end{align*}
\end{theorem}
Our computational data for $n\le 5$ suggest the following for any $k\ge 1$.

\begin{conjecture}\label{Conj:Pdfoneaway}
For any $n>k\ge 1$ and any $0\le q_1<\dots<q_n<1$, we have 
\[
	g_{s_kw_0} = \left(\frac{1}{k!}\frac{\partial^k}{\partial q_{n-k+1}\dots \partial q_n} - 1 \right) g_{w_0}.
\]
\end{conjecture}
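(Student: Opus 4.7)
The idea is to deduce the identity by passing to the limit in the discrete formula $G_{s_kw_0}=\binom{N}{k}\det A_k-G_{w_0}$ of Conjecture~\ref{Conj:Oneaway} via definition~(b) of Section~\ref{S:Densities}. Once that discrete identity is available (unconditionally for $k=1,2$ by \refT{T:k=1,2}) the continuous identity follows by a routine asymptotic computation, so the genuine difficulty is not the limit step but establishing the underlying Conjecture~\ref{Conj:Oneaway} in full generality.

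First, I would rewrite definition~(b) in the scaled form
\[
g_\pi(q_1,\dots,q_n)=\lim_{N\to\infty}\frac{N^n}{Z_N}\,G_\pi(\lfloor Nq_1\rfloor,\dots,\lfloor Nq_n\rfloor;N),
\]
and note that $N^n/Z_N\sim \bigl(\prod_{i=1}^n i!\bigr)\,N^{-n(n-1)/2}$, so that only the coefficient of $N^{n(n-1)/2}$ in $G_\pi(Nq_1,\dots,Nq_n;N)$ survives the limit. Applied to $G_{w_0}$ from \refP{P:w0} this reproduces \refC{C:w0}, which will also play the role of the subtracted term at the end.

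Next, I would extract the leading $N$-asymptotics of $\binom{N}{k}\det A_k$ at $b_i=\lfloor Nq_i\rfloor$. Using $\binom{b_i+j-1}{j-1}\sim b_i^{j-1}/(j-1)!$ in the top $n-k$ rows and $\binom{b_i+j-2}{j-2}\sim b_i^{j-2}/(j-2)!$ in the bottom $k$ rows (with the $j=1$ entry of the lower block equal to $0$), every term in the permutation expansion of $\det A_k$ contributes the same power $N^{n(n-1)/2-k}$; multiplication by $\binom{N}{k}\sim N^k/k!$ gives the required leading order $N^{n(n-1)/2}$, matching that of $G_{w_0}$. The key algebraic observation is that $\partial/\partial q_i$ applied to the $i$th row of the matrix $\bigl(q_i^{j-1}/(j-1)!\bigr)_{i,j}$ produces exactly the row $\bigl(q_i^{j-2}/(j-2)!\bigr)_j$, so the leading coefficient of $\det A_k$ equals
\[
\frac{\partial^k}{\partial q_{n-k+1}\cdots\partial q_n}\det\!\left(\frac{q_i^{j-1}}{(j-1)!}\right)_{i,j}=\frac{1}{\prod_{d=0}^{n-1}d!}\,\frac{\partial^k}{\partial q_{n-k+1}\cdots\partial q_n}\prod_{k<l}(q_l-q_k).
\]
Combining with the telescoping identity $\prod_{i=1}^n i!/\prod_{d=0}^{n-1}d!=n!$ then yields
\[
\lim_{N\to\infty}\frac{N^n}{Z_N}\binom{N}{k}\det A_k=\frac{1}{k!}\,\frac{\partial^k}{\partial q_{n-k+1}\cdots\partial q_n}\,g_{w_0},
\]
and subtracting the limit of $(N^n/Z_N)G_{w_0}=g_{w_0}$ produces exactly the claimed formula for $g_{s_kw_0}$.

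The main obstacle is, as noted, the discrete identity itself: the limit is just power-of-$N$ bookkeeping. To prove Conjecture~\ref{Conj:Pdfoneaway} in full generality one would have to extend the bijective non-intersecting-path analysis of \refT{T:k=1,2} by classifying the possible wrapping behaviours of the top $k$ bully paths and organising the resulting Lindström--Gessel--Viennot determinants into an inclusion--exclusion that collapses to the right-hand side $\binom{N}{k}\det A_k-G_{w_0}$. The combinatorial case analysis already grows sharply from $k=1$ to $k=2$, and this rapid growth is where the real difficulty lies.
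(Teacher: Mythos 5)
Your argument is essentially the paper's own: Conjecture~\ref{Conj:Pdfoneaway} is obtained there precisely as the $N\to\infty$ limit of the discrete identity in Conjecture~\ref{Conj:Oneaway} (established only for $k=1,2$ by Theorem~\ref{T:k=1,2}), and your asymptotic bookkeeping --- including the correct $N^n/Z_N$ normalisation and the identification of the leading coefficient of $\det A_k$ with the $k$-fold derivative of the Vandermonde --- correctly supplies the details the paper leaves implicit in the phrase ``Conjecture~\ref{Conj:Oneaway} translates to the following.'' As you note, the general case remains conditional on the discrete conjecture, so the statement is proved only for $k=1,2$, exactly as in the paper.
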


\smallskip
\noindent
\begin{example} For $n=4$, we know the following to be true 
\[
g_{4321} = 4!\prod_{1\leq i < j \leq 4} (q_j - q_i),
\qquad
g_{4312} =  \left(\frac{\partial}{\partial q_4} - 1 \right) g_{4321}
\]

\[
g_{4231} = \left(\frac{1}{2} \frac{\partial^2}{\partial q_3 \partial q_4} - 1\right) g_{4321},
\qquad
g_{3421} = \left(\frac{1}{6} \frac{\partial^3}{\partial q_2 \partial q_3 \partial q_4} - 1 \right) g_{4321}.
\]
\end{example}
\smallskip

The polynomials $g_w$ satisfy some interesting relations whose general form we have not been able to pin down exactly. For example, for $n \leq 4$, all the $g_w(x_1, \dots, x_n)$ satisfy Laplace's equation
\[
\frac{\partial^2 g_w}{\partial x_1^2} + \frac{\partial^2 g_w}{\partial x_2^2} + \dots + \frac{\partial^2 g_w}{\partial x_n^2} = 0.
\]

It's a classical fact \cite{ABW} that any such {\it harmonic polynomial} can be expressed as a linear combination of partial derivatives of a Vandermonde determinant (the converse, that any such combination is harmonic, is immediate). Since $g_{w_0}$ is a Vandermonde determinant, it follows that for each $w$, there is \emph{some} linear combination of its partial derivatives whose value is $g_w$. In each case there seems to be a particularly simple one. Here are some examples:

\[
g_{132} = \left( 1+ \frac{\partial}{\partial q_1} + \frac{1}{2}\frac{\partial^2}{\partial q_1^2} \right)g_{321}
\]
\[
g_{1432} = \left( -1- \frac{\partial}{\partial q_1} - \frac{1}{2}\frac{\partial^2}{\partial q_1^2} - \frac{1}{6}\frac{\partial^3}{\partial q_1^3} \right)g_{4321}
\]
\[
g_{4132} = \left(1 - \frac{\partial}{\partial q_3} - \frac{\partial}{\partial q_4} +\frac{1}{2} \frac{\partial^2}{\partial q_3 \partial q_4}\right) g_{4321}
\]

\[
g_{4213} = \left( 1- \frac{\partial}{\partial q_4} + \frac{1}{2}\frac{\partial^2}{\partial q_4^2} \right)g_{4321}
\]

For $n = 5$, we have found that only $15$ of the $24$ (up to cyclic shifts) polynomials $g_w$ satisfy Laplace's equation. 

Since $g_\pi(q_1, \dots, q_n) = g_\pi(q_1+t,\dots,q_n+t)$ for infinitely many $t$, this holds on the level of polynomials. Using this, it is easy to check that, as polynomials, $g_{\pi_1 \dots \pi_n} (q_1, \dots, q_n) = g_{\pi_2 \dots \pi_n \pi_1}(q_2, q_3, \dots, q_n, 1+q_1)$.
Hence it suffices to check that the Laplacian vanishes on a candidate from each cyclic class to conclude that it vanishes on all of them.
Thus, there cannot be any linear recursion using differentiation operators between the polynomials $g_w$ in general. 

\noindent
{\bf Open problem:} Is there some other set of operators which coincides with differentiation for small $n$ and does extend the pattern above to larger $n$?
\smallskip

The reader may note that the part of maximal degree in $g_u$ appears to be $\pm g_{w_0}$, where $w_0 = 4321$, and we choose $+$ if and only if $\ell(w_0)-\ell(u)$ is even.

We end this section with a more general conjecture. We reached this conjecture by contemplating the proof of \refT{T:k=1,2}. We have checked it for $n\leq 5$.

\begin{conjecture}\label{Conj:Pdfmanyaway} For $\mbf{k}$ such that $n>k_1>k_2+1>k_3+2>\dots >k_r+r-1>r-1$ 
and any $0\le q_1<\dots<q_n<1$,
\[
g_{s_{k_1}\dots s_{k_r}w_0} = \left(\frac{1}{k_r!}\frac{\partial^{k_r}}{\partial q_{n-k_r+1}\dots \partial q_n} - 1 \right) g_{s_{k_1}\dots s_{k_{r-1}}w_0}.
\]
\end{conjecture}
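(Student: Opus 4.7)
The plan is to deduce Conjecture~\ref{Conj:Pdfmanyaway} from Conjecture~\ref{Conj:Manyaway} by applying definition~(b), i.e., by substituting $b_i = \lfloor N q_i\rfloor$ and letting $N\to\infty$ after dividing by $Z_N$. First I would split the sum defining $G_{s_{k_1}\cdots s_{k_r}w_0}$ according to whether $r\in S$: writing $T = S\cap[r-1]$,
\[
G_{s_{k_1}\cdots s_{k_r}w_0}(b;N) = \sum_{T\subseteq[r-1]} (-1)^{|T|}\prod_{i\in T}\binom{N}{k_i}\Bigl(\det A_T - \binom{N}{k_r}\det A_{T\cup\{r\}}\Bigr).
\]
The key structural fact is that, since $k_r$ is the smallest entry of $\mathbf{k}$, adjoining $r$ to $T\subseteq[r-1]$ increases the first $k_r$ parts of the conjugate partition $\mathbf{k}(T)'$ by $1$ and leaves all others unchanged. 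Hence $A_{T\cup\{r\}}$ is obtained from $A_T$ by shifting the column-index parameter in precisely the last $k_r$ rows (rows $n-k_r+1,\dots,n$), i.e., by replacing each entry $\binom{b_i+j-1-m}{j-1-m}$ in those rows by $\binom{b_i+j-2-m}{j-2-m}$.

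Second, I would analyze the asymptotics via multilinearity of the determinant in rows. Each entry of $A_T$ is a polynomial in $b_i$ of degree $j-1-m_i$ depending only on the row; consequently $\det A_T$ is asymptotically $N^{D}\cdot \det\!\bigl[q_i^{j-1-m_i}/(j-1-m_i)!\bigr]$ for a constant $D$ determined by $T$. Shifting a row replaces the leading-coefficient entry $q_i^{j-1-m_i}/(j-1-m_i)!$ by $\frac{\partial}{\partial q_i}$ applied to it. Since the shift affects exactly $k_r$ rows, comes accompanied by $\binom{N}{k_r}\sim N^{k_r}/k_r!$, and the shift itself drops the overall $N$-degree by exactly $k_r$, the multilinear expansion gives
\[
\lim_{N\to\infty}\frac{1}{Z_N}\binom{N}{k_r}\det A_{T\cup\{r\}}\Big|_{b=\lfloor Nq\rfloor}
= \frac{1}{k_r!}\frac{\partial^{k_r}}{\partial q_{n-k_r+1}\cdots\partial q_n}\,\lim_{N\to\infty}\frac{1}{Z_N}\det A_T\Big|_{b=\lfloor Nq\rfloor}.
\]

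Finally, applying this limit termwise to the split above and invoking Conjecture~\ref{Conj:Manyaway} at level $r-1$ to identify
\[
\lim_{N\to\infty}\frac{1}{Z_N}\sum_{T\subseteq[r-1]}(-1)^{|T|}\prod_{i\in T}\binom{N}{k_i}\det A_T\Big|_{b=\lfloor Nq\rfloor} = g_{s_{k_1}\cdots s_{k_{r-1}}w_0}(q),
\]
one obtains the desired identity $g_{s_{k_1}\cdots s_{k_r}w_0} = \bigl(\tfrac{1}{k_r!}\tfrac{\partial^{k_r}}{\partial q_{n-k_r+1}\cdots\partial q_n}-1\bigr)g_{s_{k_1}\cdots s_{k_{r-1}}w_0}$. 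The main obstacle is the rigorous justification of the multilinear asymptotic step: one must verify that the subleading contributions from each of the $k_r$ shifted rows really do vanish after normalization by $Z_N$, rather than conspiring to contribute at leading order across the $\det$. A secondary issue is sign conventions, since the $r=1$ case of Conjecture~\ref{Conj:Manyaway} must be compared with Conjecture~\ref{Conj:Oneaway} (this may force one to read $(-1)^{|S|}$ as $(-1)^{r-|S|}$), but such a global sign does not affect the recursive form of the identity.
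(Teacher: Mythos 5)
The statement you are addressing is presented in the paper as a conjecture with no proof: the paper merely remarks that it is ``the implication of Conjecture~\ref{Conj:Manyaway} to the continuous distributions,'' which is exactly the derivation you have carried out, so your route coincides with the paper's intended one. Your limit computation is sound: the split over whether $r\in S$, the observation that adjoining the smallest part $k_r$ raises the first $k_r$ parts of the conjugate partition and hence shifts precisely rows $n-k_r+1,\dots,n$ of $A_T$, and the degree count against $\binom{N}{k_r}\sim N^{k_r}/k_r!$ are all correct; the ``main obstacle'' you flag is handled routinely by factoring $N^{j-1}$ out of column $j$ and $N^{-m_i}$ out of row $i$, after which each entry is its leading coefficient plus $O(1/N)$ and no conspiracy of subleading terms is possible. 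Two minor points: definition (b) as literally written has limit $0$ and needs the Jacobian factor $N^{n}$, which cancels from both sides of the recursion; and you are right that the sign in Conjecture~\ref{Conj:Manyaway} must be read as $(-1)^{r-|S|}$ to be consistent with Conjecture~\ref{Conj:Oneaway} (with $(-1)^{|S|}$ your identity would come out with the opposite overall sign). The essential caveat is that your argument is conditional on Conjecture~\ref{Conj:Manyaway}, which the paper proves only in the case $r=1$, $k\le 2$; so what you have established is that the discrete conjecture implies the continuous one, not the statement itself --- consistent with its remaining a conjecture in the paper.
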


An example, where the conjecture is true, is $n=4, k_1=3, k_2=1$:
\[
g_{3412} = \left(\frac{1}{6}\frac{\partial^3}{\partial q_2\partial q_3\partial q_4} - 1 \right)\left(\frac{\partial}{\partial q_4} - 1\right) g_{4321}.
\]

\subsection{Probabilities of the discrete chain}\label{S:Disc chain}
In this section, we will prove some exact formulas for $G_\pi$, which are used to establish the probability density functions $g_\pi$ in Section \ref{S:ProbDensity}.
We start with the easiest case, which is the reverse permutation $w_0 = n(n-1)\dots 1$. We say that a bully path \emph{wraps} when it moves from the right side to the left, that is, from $(i,N-1)$ to $(i,0)$. 

\begin{proposition} \label{P:w0}
For any $N\ge n\ge 2$ we have 
\[
  G_{w_0}(b_1,\dots,b_n;N)=\det \left[ \binom{b_i+j-1}{j-1} \right] _{1\le i,j\le n}=\prod_{1\le k<l\le n} (b_l-b_k)  \prod_{d=1}^{n-1} \frac{1}{d!}.
\]
\end{proposition}

\begin{proof}

Suppose we have a discrete MLQ whose bottom row $n$ is labeled by the reverse permutation, with a particle labeled $n+1-i$ at position $b_i$, for $1\leq i\leq n$, where $b_1<\dots<b_n$. It is a direct consequence of the construction of MLQs that the positions of the boxes in row $n-1$, $b'_1<\dots<b'_{n-1}$ must be such that $b_i<b'_i\le b_{i+1}$ for $1\le i<n$, hence they must also correspond to the reverse permutation (of length $n-1$). It follows by induction that each row is labeled by a reverse permutation.
 
Thus the bully paths do not wrap and are non-intersecting, so to enumerate them we may use the Lindstr\"om-Gessel-Viennot lemma, see for example \cite [Chapter 2.7]{EC1}. 
Extend each bully path to start at the beginning of the row, that is at positions $(r,0)$ for $1\le r\le n$ (matrix notation). See Figure \ref{fi_LGV_simple} for an illustration.

The number of (non-wrapping) paths using only right and down steps from $(r,0)$ to $(n,b_i)$ is $\binom{b_i+n-r}{n-r}$. Setting $j=n+1-r$ gives the determinant in the proposition. The factor $\prod_{d=1}^{n-1} \frac{1}{d!}$ can be taken out and using column operations we reduce to the standard form of the Vandermonde determinant.
\end{proof}

\begin{figure}[htb]
\begin{tikzpicture}
\matrix [column sep=0cm, row sep = 0.15cm] 
{
\node[left] {$(1,0)$}; \node(a1)  {$\bullet$}; &  &  & & & \node(a2) {\ $\circ_1$}; & & & &\\
\node[left] {$(2,0)$};\node(b1) {$\bullet$}; & & & &\node(c2) {\ $\circ_2$};  & \node(a3){}; & \node(b2) {\ $\circ_1$};  & & &  & \\
\node[left] {$(3,0)$};\node(d1) {$\bullet$}; & & &\node(d2){$\circ_3$}; &\node(c3){}; & \node(c4) {\ $\circ_2$}; & \node(b3) {};&  \node(b4) {\ $\circ_1$}; & & \\
& & &\node(d3){}; & \node(d4){\ $\circ_3$}; & \node(c5) {}; & \node(c6) {\ $\circ_2$}; &\node(b8){}; & \node(b6) {\ $\circ_1$};& \\
\node {$\vdots$};& & & & &\node[green] {\hskip-20pt$\ddots$};   & &  &\node[blue]{$\hskip-30pt\ddots$};  & \node[red] {\hskip-30pt $\ddots$}; \\
\node[left] {$(n,0)$};\node(e1) {$\bullet$}; & \node(e2){$\times$}; & &\node{$\dots$}; & &    \node(d7) {$\times$};    & & & \node(c7) {$\times$}; &  \node(b7) {$\times$}; \\
&\node {$(n,b_1)$};& & & & \node{\hskip5pt $(n,b_{n-2})$};    & & &\node {\hskip5pt$(n,b_{n-1})$}; & \node {\hskip5pt$(n,b_n)$}; \\
}; 
\draw [-,red] (a1.center) -- (a2.center) -- (a3.center) -- (b2.center) -- (b3.center) -- (b4.center) -- (b8.center) -- (b6.center) --([yshift=-15pt] b6.center);
\draw [-,blue] (b1.center) -- (c2.center) -- (c3.center) -- (c4.center) -- (c5.center) -- (c6.center)--([yshift=-15pt] c6.center);
\draw [-,green] (d1.center) -- (d2.center) -- (d3.center) --(d4.center)--([yshift=-15pt] d4.center);
\draw [-,black] (e1.center) -- (e2.center);
\draw [-,green] (d7.center) -- ([xshift=-5pt] d7.center) --([xshift=-5pt, yshift=5pt] d7.center);
\draw [-,blue] (c7.center) -- ([xshift=-5pt] c7.center) --([xshift=-5pt, yshift=5pt] c7.center);
\draw [-,red] (b7.center) -- ([xshift=-5pt] b7.center) --([xshift=-5pt, yshift=5pt] b7.center);
\end{tikzpicture}
\caption{Counting multiline queues with lattice paths.}
\label{fi_LGV_simple}
\end{figure}
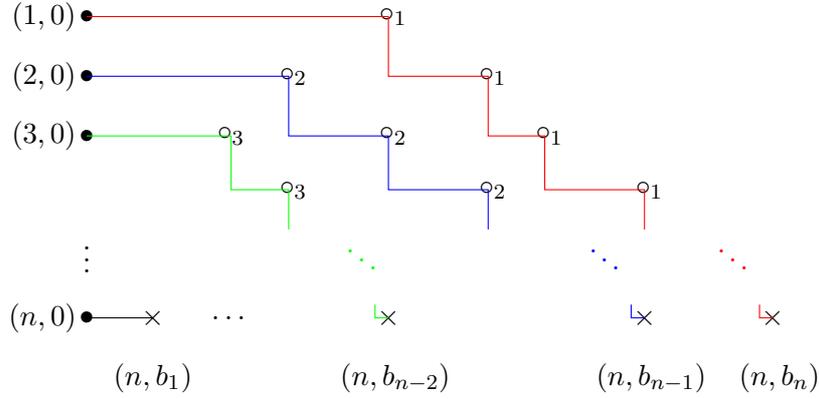

Our next task is to prove the statement for the discrete MLQ's used in the proof of \refT{T:Cont12}.
For $1\le k\le n$, let $A_k$ be the matrix with entries $\binom{b_i+j-1}{j-1}$ in rows $1\le i\le n-k$ and entries 
$\binom{b_i+j-2}{j-2}$ in rows $n-k<i \le n$. Also, let $P_i$ be the bully path of the $i$'th class particle extended so it starts at $(i,0)$.

\begin{theorem}\label{T:k=1,2} For $k=1,2$, we have
\[G_{s_kw_0}=\binom{N}{k}\det A_k - G_{w_0}. 
\]
\end{theorem}

\begin{proof}
We will first consider the case $k=1$. 
We distinguish between two different types of MLQ's that can result in the permutation $s_1w_0$, depending on whether or not
$P_1$ is wrapping.

The first type is an MLQ $\con$ where there is no bully path wrapping. Then $P_1$ and $P_2$ will touch at some point $(r,c)$, after which $P_1$ 
will be below or on $P_2$ (bully paths may, however, coincide only in points and horisontal segments), see Figure \ref{fi_typeI}.
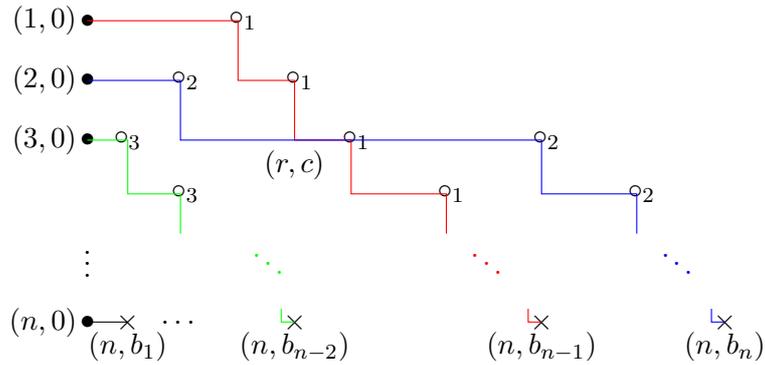
\begin{figure}[htb]
\begin{tikzpicture}
\matrix [column sep=0cm, row sep = 0.15cm] 
{
\node[left] {$(1,0)$}; \node(a1)  {$\bullet$};    & & & \node(a2) {\ $\circ_1$}; & & & &\\
\node[left] {$(2,0)$};\node(b1) {$\bullet$};  & &\node(c2) {\ $\circ_2$};  & \node(a3){}; & \node(b2) {\ $\circ_1$};  & & &  & \\
\node[left] {$(3,0)$};\node(d1) {$\bullet$};  &\node(d2){$\circ_3$}; &\node(c3){}; & &\node(b3) {};& \node(c4) {\ $\circ_1$}; & & \node(b4) {\ $\circ_2$};  \\
& \node(d3){}; & \node(d4){\ $\circ_3$};   && & \node(c5) {}; & \node(c6) {\ $\circ_1$}; &\node(b8){}; & \node(b6) {\ $\circ_2$}; \\
\node {$\vdots$};& &  & &\node[green] {\hskip-20pt$\ddots$};   &  &\node[red]{$\hskip30pt\ddots$};  && \node[blue] {\hskip30pt $\ddots$}; \\
\node[left] {$(n,0)$};\node(e1) {$\bullet$}; & \node(e2){$\times$}; & \node{$\dots$}; & &    \node(d7) {$\times$};    & & & \node(c7) {$\times$}; & &  \node(b7) {$\times$}; \\
}; 
\draw [-,red] (a1.center) -- (a2.center) -- (a3.center) -- (b2.center) -- (b3.center) -- (c4.center)-- (c5.center) -- (c6.center)--([yshift=-15pt] c6.center);
\draw [-,blue] (b1.center) -- (c2.center) -- (c3.center) -- (b4.center) -- (b8.center) -- (b6.center) --([yshift=-15pt] b6.center) ;
\draw [-,green] (d1.center) -- (d2.center) -- (d3.center) --(d4.center)--([yshift=-15pt] d4.center);
\draw [-,black] (e1.center) -- (e2.center);
\draw [-,green] (d7.center) -- ([xshift=-5pt] d7.center) --([xshift=-5pt, yshift=5pt] d7.center);
\draw [-,red] (c7.center) -- ([xshift=-5pt] c7.center) --([xshift=-5pt, yshift=5pt] c7.center);
\draw [-,blue] (b7.center) -- ([xshift=-5pt] b7.center) --([xshift=-5pt, yshift=5pt] b7.center);
\node [below] at (b3) {$(r,c)$};
\node [below] at (e2) {$(n,b_1)$};
\node [below] at (d7) {$(n,b_{n-2})$};
\node [below] at (c7) {$(n,b_{n-1})$};
\node [below] at (b7) {$(n,b_n)$};
\end{tikzpicture}
\caption{Paths of type I for the case $k=1$.}
\label{fi_typeI}
\end{figure}

We could also describe this as the reverse permutations on the first $r-1$ rows of $\con$, after which 
the 1 and 2 switch places. 

To count MLQ's of the first type, we define an injection into sets of certain non-intersecting paths $\mathcal L_I=\{L_n,\dots, L_1\}$. The path $L_1$ is formed by concatenating $P_1$ from $(1,0)$ to $(r,c)$ with $P_2$ from $(r,c)$ to $(n,b_n)$ and then lifting the 
resulting path one step upwards. So, $L_1$ is a path from $(0,0)$ to $(n-1,b_n)$ that passes through $(r-1,c)$. The path $L_2$ is formed by 
concatenating $P_2$ from $(2,0)$ to $(r,c)$ with $P_1$ from $(r,c)$ to $(n,b_{n-1})$. For 
$3\le i \le n$, $L_i=P_i$. Thus $\mathcal L_I$ is a set of non-intersecting lattice paths with starting positions
$\mathcal S_I=\{(n,0),\dots,(3,0),(2,0),(0,0)\}$ and ending positions $\mathcal M_I=\{(n,b_1),\dots,(n,b_{n-2}),(n,b_{n-1}),(n-1,b_n)\}$. 
By the Lindstr\"om-Gessel-Viennot lemma, all such sets of paths are counted by the determinant of the matrix:
\[
D_I= \left[\begin{array}{ccc|c}
\ddots&\vdots &\iddots &\vdots\\
\cdots &\binom{b_i+j-1}{j-1}&\cdots &\binom{b_i+n}{n}\\
\iddots & \vdots  &\ddots &\vdots \\
\hline
\cdots &\binom{b_n+j-2}{j-2}&\cdots &\binom{b_n+n-1}{n-1}\\
\end{array}\right].
\]

Sets of paths $\mathcal L_I$ where the vertical distance between $L_1$ and $L_2$ is always two or more do not come from an MLQ $\con$ of the first type.  
Since if we lower $L_1$ one vertical step they would still not intersect.
To subtract this over-count, we may thus count the number of non-intersecting paths from $\mathcal S=\{(n,0),\dots,(2,0),(1,0)\}$ to $\mathcal M=\{(n,b_1),
\dots,(n,b_{n-1}),(n,b_n)\}$, which is precisely $G_{w_0}$ by Proposition \ref{P:w0}. Thus the number of MLQ's of the type I is $\det(D_1) -
G_{w_0}$.

The second type of MLQ's $\con$, has a bully path wrapping and the only possibility is that $P_1$ wraps in row 2 before finding a box to label 1, see Figure \ref{F:k1}. Thus $P_1$ will overlap with $P_2$ in the beginning of row two and possibly more later. Apart from that, no paths are touching. We now describe a bijection from such MLQ's to non-intersecting lattice paths $\mathcal L_{I\! I}=\{L_n,\dots, L_{1}, L_{0}\}$ with starting positions $\mathcal S_{I\! I}=\{(n,0),\dots,(2,0),(1,0),(0,0)\}$ and ending positions $\mathcal M_{I\! I}=\{(n,b_1),\dots,(n,b_{n-2}),(n,b_{n-1}),(n-1,b_n),(1,N-1)\}$.
We define $L_0$ as a translation one step upwards of $P_1$ from $(1,0)$ to $(2,N-1)$.
The path $L_1$ is a translation one step upwards of $P_2$.  $L_2$ is the part of $P_1$ going from $(2,0)$ to $(n,b_{n-1})$. For $3\le i \le n$, $L_i=P_i$.

\begin{figure}[h!]
\begin{tikzpicture}
\matrix [column sep=0cm, row sep = 0.15cm] 
{
\node(a1) {$\bullet$}; &  &  & & & &  & \node(a2) {\ $\circ_1$}; & &\\
\node(b1) {$\bullet$}; & & & &\node(c2) {\ $\circ_1$};  & & \node(b2) {\ $\circ_2$};  & \node(a3){}; & &  &\node(a4) {$\times$};  \\
\node(d1) {$\bullet$}; & & &\node(d2){$\circ_3$}; &\node(c3){}; & \node(c4) {\ $\circ_1$}; & \node(b3) {};&  \node(b4) {\ $\circ_2$}; & & \\
& & &\node(d3){}; & \node(d4){\ $\circ_3$}; & \node(c5) {}; & &  \node(c6) {\ $\circ_1$}; & \node(b6) {\ $\circ_2$};& \\
\node {$\vdots$};& & & & &\node[green] {\hskip-20pt$\ddots$};   & &  &\node[red]{$\hskip-30pt\ddots$};  & \node[blue] {\hskip-30pt $\ddots$}; \\
\node(e1) {$\bullet$}; & \node(e2){$\times$}; & &\node{$\dots$}; & &    \node(d7) {$\times$};    & & & \node(c7) {$\times$}; &  \node(b7) {$\times$}; \\
&\node {$b_1$};& & & & \node{\hskip5pt $b_{n-2}$};    & & &\node {\hskip5pt$b_{n-1}$}; & \node {\hskip5pt$b_n$}; \\
}; 
\draw [-,red] (a1.center) -- (a2.center) -- (a3.center) -- (a4.center);
\draw [-,blue] ([yshift=2pt] b1.center) -- ([yshift=2pt] b2.center) -- (b3.center) -- (b4.center) -- (c6.center) -- (b6.center)--([yshift=-15pt] b6.center);
\draw [-,red] ([yshift=-1pt] b1.center) -- ([yshift=-1pt] c2.center) -- (c3.center) -- (c4.center) -- (c5.center) -- (c6.center)--([yshift=-15pt] c6.center);
\draw [-,green] (d1.center) -- (d2.center) -- (d3.center) --(d4.center)--([yshift=-15pt] d4.center);
\draw [-,black] (e1.center) -- (e2.center);
\draw [-,green] (d7.center) -- ([xshift=-5pt] d7.center) --([xshift=-5pt, yshift=5pt] d7.center);
\draw [-,red] (c7.center) -- ([xshift=-5pt] c7.center) --([xshift=-5pt, yshift=5pt] c7.center);
\draw [-,blue] (b7.center) -- ([xshift=-5pt] b7.center) --([xshift=-5pt, yshift=5pt] b7.center);
\end{tikzpicture}
\begin{tikzpicture}
\matrix [column sep=0cm, row sep = 0.1cm] 
{
\node(a1) {$\bullet$}; & & & &  & &  & \node(a2) {\phantom{$\circ_1$}}; & & \\
\node(b1) {$\bullet$}; & & & &  & & \node(b2) {\phantom{$\circ_2$}};  & \node(a3){}; & & &\node(a4) {$\times$};  \\
\node(c1) {$\bullet$}; & & & &\node(c2) {\phantom{$\circ_1$}}; &&  \node(b3) {};&  \node(b4) {\phantom{$\circ_2$}};  & \\
\node(d1) {$\bullet$}; & & &\node(d2){\phantom{$\circ_3$}}; &\node(c3){}; & \node(c4) {\phantom{$\circ$}}; & & \node(b5) {};& \node(b6) {\phantom{$\circ$}};&  \\
& & &\node(d3){}; & \node(d4){\phantom{$\circ_3$}}; & \node(c5) {}; & &  \node(c6) {\phantom{$\circ_1$}}; & & \node[blue] {\hskip-30pt $\ddots$}; \\
\node {$\vdots$};& & & & &\node[green] {\hskip-20pt$\ddots$};   & &  &\node[red]{$\hskip-25pt\ddots$};  & \node(b7) {$\times$};  \\
\node(e1) {$\bullet$}; & \node(e2){$\times$};& & \node{$\dots$};& &   \node(d7) {$\times$};   & & & \node(c7) {$\times$}; & & \\
&\node {$b_1$};& & & & \node{\hskip5pt $b_{n-2}$};    & & &\node {\hskip5pt$b_{n-1}$}; & \node {\hskip5pt$b_n$}; \\
}; 
\draw [-,red] (a1.center) -- (a2.center) -- (a3.center) -- (a4.center);
\draw [-,blue] (b1.center) -- (b2.center) -- (b3.center) -- (b4.center) -- (b5.center) -- (b6.center)--([yshift=-15pt] b6.center);
\draw [-,red] (c1.center) -- (c2.center) -- (c3.center) -- (c4.center) -- (c5.center) -- (c6.center)--([yshift=-15pt] c6.center);
\draw [-,green] (d1.center) -- (d2.center) -- (d3.center) --(d4.center)--([yshift=-15pt] d4.center);
\draw [-,black] (e1.center) -- (e2.center);
\draw [-,green] (d7.center) -- ([xshift=-5pt] d7.center) --([xshift=-5pt, yshift=5pt] d7.center);
\draw [-,red] (c7.center) -- ([xshift=-5pt] c7.center) --([xshift=-5pt, yshift=5pt] c7.center);
\draw [-,blue] (b7.center) -- ([xshift=-5pt] b7.center) --([xshift=-5pt, yshift=5pt] b7.center);
\node [ ] at  ([xshift=10pt, yshift=7pt] a1) {$L_0$};
\node [ ] at  ([xshift=10pt, yshift=7pt] b1) {$L_1$};
\node [ ] at  ([xshift=10pt, yshift=7pt] c1) {$L_2$};
\node [ ] at  ([xshift=10pt, yshift=7pt] d1) {$L_3$};
\node [ ] at  ([xshift=6pt, yshift=7pt] e1) {$L_n$};
\end{tikzpicture}
\caption{On top is a  schematic image of an MLQ projecting to $s_1w_0$ of type II. Below is the corresponding set $\mathcal L_{I\! I}$ of non-intersecting lattice paths as in the proof of Theorem \ref{T:k=1,2}. }
\label{F:k1}
\end{figure}
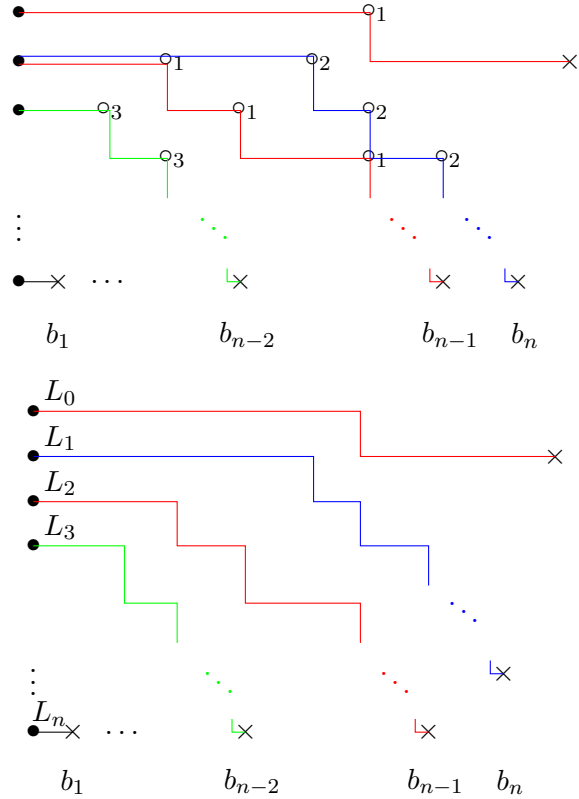

By the Lindst\"om-Gessel-Viennot lemma all such sets of non-intersecting paths are counted by the determinant of the $(n+1)\times (n+1)$-matrix:

\[
D_{I\! I}= \left[\begin{array}{ccccc}
1&  \cdots &\binom{b_1+j-1}{j-1}&\cdots &\binom{b_1+n}{n}\\
\vdots& &\vdots & \vdots &\vdots\\
1& \cdots &\binom{b_{n-1}+j-1}{j-1}&\cdots &\binom{b_{n-1}+n}{n}\\
\hline
0& \cdots &\binom{b_n+j-2}{j-2}&\cdots &\binom{b_n+n-1}{n-1}\\
0 &\cdots & 0& 1 & \binom{N}{1}\\
\end{array}\right].
\]

Hence the total number of MLQ's is
\[G_{s_1w_0}(b_1,\dots,b_n;N)=\det D_{I\! I}+\det D_I - G_{w_0}.
\]
Expanding $D_{I\! I}$ along the bottom row gives $\ \det D_{I\! I}=N\cdot \det A_1-\det D_I$ and the statement for $k=1$ follows.

\bigskip

The case $k=2$ is very similar but more complicated. This time there are three different types of MLQ's.

\begin{itemize}
\item Type I: no bully path wraps.
\item Type II: $P_2$ wraps in row 3. 
\item Type III: $P_2$ wraps in row $3$ and $P_1$ wraps in row $2$.
\end{itemize}

These are all the cases since if $P_1$ wraps then $P_2$ must also wrap for the 1 to end up last in the permutation.
For each type we give an injection to a set of tuples of non-intersecting paths, which can be counted using the Lindstr\"om-Gessel-Viennot Lemma.

Type I: Assume $P_1$ and $P_2$ intersect for the first time at $(r,c)$. No other bully paths touch.
We map such MLQ's injectively to $\mathcal L_I=\{L_n,\dots, L_1\}$, with starting positions
$\mathcal S_I=\{(n,0),\dots,(3,0),(1,0),(0,0)\}$ and ending positions $\mathcal M_I=\{(n,b_1),\dots,(n,b_{n-2}),(n-1,b_{n-1}),(n-1,b_n)\}$. 
Let $L_1$ be $P_1$ translated one step upwards. Let $L_2$ be the translation one step upwards of the concatenation of $P_2$ 
from $(2,0)$ to $(r,c)$, and $P_3$ from $(r,c)$ to $(n,b_{n-1})$. Let $L_3$ be the concatenation of the remaining pieces of $P_2$ and $P_3$ and $L_i=P_i$, 
for $4\le i\le n$. The total number of such $n$-tuples of non-intersecting paths is the determinant of the following $n\times n$-matrix:

\[
E_I= \left[\begin{array}{cccc|cc}
1&\ddots&\vdots &\iddots &\vdots&\vdots\\
\vdots &\cdots &\binom{b_i+j-1}{j-1}&\cdots &\binom{b_i+n-1}{n-1}&\binom{b_i+n}{n}\\
1&\iddots & \vdots  &\ddots &\vdots&\vdots \\
\hline
0&\cdots &\binom{b_{n-1}+j-2}{j-2}&\cdots &\binom{b_{n-1}+n-2}{n-2}&\binom{b_{n-1}+n-1}{n-1}\\[0.3em]
0&\cdots &\binom{b_n+j-2}{j-2}&\cdots &\binom{b_n+n-2}{n-2}&\binom{b_n+n-1}{n-1}\\
\end{array}\right].
\]
As in the type I case above we must subtract those where the vertical distance between $L_2$ and $L_3$ is at least 2 all the time, which again is $G_{w_0}$.

Type II: Let $(r,c)$ be the point where $P_1$ and $P_2$ intersect the first time. In the beginning of row 3, $P_2$, which wraps, and $P_3$ will overlap.
Here $L_0$ is the translation two steps upwards of the concatenation of $P_1$ to $(r,c)$ and $P_2$ 
from $(r,c)$ to $(3,N-1)$ before it wraps. $L_1$ is the translation one step upwards of the concatenation of $P_2$ from $(2,0)$ to $(r,c)$ and $P_1$ from $(r,c)$ to $(n,b_n)$. Let $L_2$ be the translation one step upwards of $P_3$ and let $L_3$ be $P_2$ after it has wrapped, that is, from $(3,0)$ to $(n,b_{n-2})$.

We map these MLQ's injectively to $\mathcal L_{I\! I}=\{L_n,\dots, L_1,L_0\}$, with starting positions
$\mathcal S_{I\! I}=\{(n,0),\dots,(2,0),(1,0),(-1,0)\}$ and ending positions $\mathcal M_{I\! I}=\{(n,b_{1}),\dots, (n,b_{n-2}), (n-1,b_{n-1}),(n-1,b_n),(1,N-1)\}$. 
To count the number of MLQ's of type II we have to subtract of the sets $\mathcal L_{I\! I}$ where the vertical distance between $L_0$ and $L_1$ is 2 or more in each column, which can be counted by lowering the start and endpoints of $L_0$ by one. This means that the number of MLQ's of type II is counted by the difference $\det(E_{I\! I})-\det(E_{I\! I}')$, where $E_{I\! I}$, $E_{I\! I}'$ are the following $(n+1)\times (n+1)$-matrices:

\[
E_{I\! I}= \left[\begin{array}{cccc|c}
1&\ddots&\vdots &\iddots &\vdots\\
 &\cdots &\binom{b_i+j-1}{j-1}&\cdots &\binom{b_i+n+1}{n+1}\\
1&\iddots & \vdots  &\ddots &\vdots \\
\hline
0& \cdots &\binom{b_{n-1}+j-2}{j-2}&\cdots &\binom{b_{n-1}+n}{n}\\
0&\cdots &\binom{b_n+j-2}{j-2}&\cdots  &\binom{b_n+n}{n}\\[0.3em]
0& \cdots & \!\! \cdots \ \ 0 & 1 &\binom{N+1}{2}
\end{array}\right],\]
\[
E_{I\! I}'= \left[\begin{array}{ccccc|c}
1&\ddots&\vdots &\iddots &\vdots&\vdots\\
&\cdots &\binom{b_i+j-1}{j-1}&\cdots &\binom{b_i+n-1}{n-1}&\binom{b_i+n}{n}\\
1&\iddots & \vdots  &\ddots &\vdots&\vdots \\
\hline
0&\cdots &\binom{b_{n-1}+j-2}{j-2}&\cdots &\binom{b_{n-1}+n-2}{n-2}&\binom{b_{n-1}+n-1}{n-1}\\
0 &\cdots &\binom{b_n+j-2}{j-2}&\cdots  &\binom{b_n+n-2}{n-2}&\binom{b_n+n-1}{n-1}\\[0.3em]
0 &\cdots& \!\! \cdots \ \ 0 & 1 & \binom{N}{1}&\binom{N+1}{2}
\end{array}\right].
\]

Type III:
The MLQ's where both $P_1$ and $P_2$ wrap around can similarly be bijectively mapped to $n+2$-tuples of non-intersecting 
paths starting in positions
$\mathcal S_{I\! I\! I}=\{(n,0),\dots,(1,0),(0,0), (-1,0)\}$ and ending positions $\mathcal M_{I\! I\! I}=\{(n,b_{1}),\dots,(n,b_{n-2}), (n-1,b_{n-1}), (n-1,b_{n})$, $(1,N-1),(0,N-1)\}$. 
These are counted by the determinant of the matrix:

\[
E_{I\! I\! I}= \left[\begin{array}{cccccc}
1&\ddots&\vdots &&\iddots & \\
1&\cdots &\binom{b_i+j-1}{j-1}&&\cdots &\\
1& \iddots  &\vdots & &\ddots &\\
\hline
0&\cdots &\binom{b_{n-1}+j-2}{j-2}&&\cdots & \\
0 &\cdots &\binom{b_n+j-2}{j-2}&&\cdots  & \\
0 & \cdots &0 & 1 & \binom{N}{1} & \binom{N+1}{2} \\[0.3em]
0 & \cdots &0 & 0 & 1 & \binom{N}{1} \\
\end{array}\right].
\]
Expanding the matrices $E_I,E_{I\! I},E_{I\! I}', E_{I\! I\! I}$ along the bottom rows most terms of the determinants cancel to give the claimed result.
\end{proof}

One way to prove Conjecture \ref{Conj:Pdfoneaway} would be to first establish the following formula for the discrete chain.

\begin{conjecture}\label{Conj:Oneaway} For  $N\ge n>k\ge 1$ and $0\le b_1<\dots<b_n\le N-1$ the number of MLQ's with bottom row $s_kw_0$ is
\[G_{s_kw_0}(b_1,\dots,b_n;N)=\binom{N}{k}\det A_k - G_{w_0}.
\]
\end{conjecture}

\medskip
The following more general conjecture for commuting simple reflections implies Conjecture \ref{Conj:Pdfmanyaway} .
For  a partition $\mbf{k}=(k_1\ge\cdots k_r\ge 1)$, let $\mbf{k'}$ denote the conjugate partition. For a subset $S\subseteq [r]$, let $\mbf{k}(S)$ be the partition consisting of the parts $k_{i}, i\in S$. With $\mbf{k}(S)'$ we denote the conjugate of $\mbf k(S)$.
Let $A_S$ be the matrix with entries $\binom{b_i+j-1-k_{n+1-i}(S)'}{j-1-k_{n+1-i}(S)'}$, where $k_{i}(S)'=0$ if $i>k_{\min S}$.

\begin{conjecture}\label{Conj:Manyaway} For  $N\ge n$ and $\mbf{k}$ such that $n>k_1>k_2+1>k_3+2>\dots >k_r+r-1>r-1$ and $0\le b_1<\dots<b_n\le N-1$, we have
\[
	G_{s_{k_1}\dots s_{k_r}w_0}(b_1,\dots,b_n;N)= \sum_{S\subseteq [r]}  (-1)^{r-|S|} \prod_{i\in S} \binom{N}{k_i}\det A_S.
\]
\end{conjecture}

Note that $A_\emptyset$ is the Vandermonde matrix so it specializes to Conjecture \ref{Conj:Oneaway} for $r=1$. We have checked this conjecture for $n \leq 5$ (this only includes cases with $r \leq 2$).

\subsection{Probability of given permutation}
One obvious question to ask about the distribution $\SD_n$ is the probability $p_{\pi}$ that the particles form a certain permutation $\pi$.
We can compute the exact probability of the reverse permutation $w_0$.

\begin{theorem}\label{T:ExactProb}
The probability that the particles form the reverse permutation $w_0$ is
\[
	p_{w_0}=\frac{1}{\prod_{k=1}^{n-1} {2k+1 \choose k+1}}.
\] 
\end{theorem}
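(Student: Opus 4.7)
The plan is to reduce the statement to a known Selberg-type integral. By Corollary \ref{C:w0}, on the ordered simplex $0<q_1<\dots<q_n<1$ the density is
\[
g_{w_0}(q_1,\dots,q_n) = n!\prod_{1\le k<l\le n}(q_l-q_k).
\]
Since the integrand $\prod_{k<l}|q_l-q_k|$ is symmetric in the variables and agrees with $\prod_{k<l}(q_l-q_k)$ on the fundamental domain $q_1<\dots<q_n$, one can symmetrize and absorb the factor of $n!$:
\[
p_{w_0} \;=\; \int_{0<q_1<\dots<q_n<1} n!\prod_{k<l}(q_l-q_k)\,dq \;=\; \int_{[0,1]^n} \prod_{1\le k<l\le n}|q_l-q_k|\,dq_1\cdots dq_n.
\]

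The second integral is exactly the Selberg integral with parameters $\alpha=\beta=1$ and $\gamma=\tfrac{1}{2}$, so one gets the closed form
\[
p_{w_0} \;=\; \prod_{j=0}^{n-1}\frac{\Gamma(1+j/2)^2\,\Gamma(1+(j+1)/2)}{\Gamma(2+(n+j-1)/2)\,\Gamma(3/2)}.
\]

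The remaining step is purely algebraic: show this product equals $1/\prod_{k=1}^{n-1}\binom{2k+1}{k+1}$. I would prove this by induction on $n$, checking that the ratio of consecutive values of the Selberg expression equals the corresponding ratio of the target. After telescoping the denominators, the ratio $P_n/P_{n-1}$ reduces to
\[
\frac{\Gamma((n+1)/2)^2\,\Gamma((n+2)/2)^2}{\Gamma(n+1)\,\Gamma(3/2)\,\Gamma((2n+1)/2)}.
\]
Applying Legendre's duplication formula $\Gamma(z)\Gamma(z+\tfrac{1}{2})=2^{1-2z}\sqrt{\pi}\,\Gamma(2z)$ with $z=(n+1)/2$ collapses this to $2/\binom{2n}{n}$, and the identity $\binom{2n}{n}/2=\binom{2n-1}{n}=\binom{2(n-1)+1}{(n-1)+1}$ matches the new factor introduced when extending the product from $n-1$ to $n$. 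The base case $n=1$ gives $p_{w_0}=1$, which matches the empty product.

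The only real obstacle is the telescoping/duplication computation, which is routine but needs to be done carefully. As an alternative route one could work at the finite level first: by Proposition \ref{P:w0} one would sum the Vandermonde $\prod(b_l-b_k)/\prod d!$ over $0\le b_1<\dots<b_n\le N-1$ (e.g.\ via the LGV lemma applied to non-intersecting paths ending at a variable column, or via a discrete Selberg/de Bruijn identity), divide by $Z_N=\prod_i\binom{N}{i}$, and let $N\to\infty$. The continuous Selberg path is cleaner because the Gamma-function algebra is better suited to the duplication identity than binomial bookkeeping, so I would favor it.
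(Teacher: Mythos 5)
Your argument is correct, but it takes a genuinely different route from the paper. You integrate the density $g_{w_0}(q_1,\dots,q_n)=n!\prod_{k<l}(q_l-q_k)$ from Corollary \ref{C:w0} over the ordered simplex, symmetrize to get $\int_{[0,1]^n}\prod_{k<l}|q_k-q_l|\,dq$, and evaluate this as the Selberg integral at $\alpha=\beta=1$, $\gamma=\tfrac12$; the Gamma-function bookkeeping you sketch (the ratio $P_n/P_{n-1}$ collapsing via Legendre duplication to $2/\binom{2n}{n}=1/\binom{2n-1}{n}$) does check out, e.g. $n=2,3$ give $1/3$ and $1/30$ as required. The paper instead stays entirely combinatorial: it observes that an MLQ whose bottom row is $w_0$ must have \emph{every} row a reverse permutation with interlacing position constraints, so the relative orders of the $\binom{n+1}{2}$ boxes form a staircase pattern; it then counts these by Thrall's formula for standard Young tableaux of staircase shape and divides by the multinomial $\binom{\binom{n+1}{2}}{1,2,\dots,n}$ counting all assignments of boxes to rows. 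The trade-off: your proof is short once the Selberg evaluation is granted, but it imports that nontrivial classical result as a black box and requires the limiting density of Corollary \ref{C:w0} as input; the paper's proof is elementary and self-contained modulo Thrall's enumeration, and it exposes the combinatorial reason for the answer (the probability is a ratio of two explicit counts of discrete objects), which is why the denominator factors into small primes. The agreement of the two computations is, in effect, the known identity relating the Selberg integral at $\gamma=\tfrac12$ to the enumeration of staircase tableaux, so your route is a legitimate alternative proof rather than a disguised version of the same one.
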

\begin{proof}
As in the proof of \refP{P:w0}, each row $i$ of an MLQ corresponding to $w_0$ is labeled by the reverse permutation of length $i$. Furthermore, the positions of the boxes on row $i-1$ interleave the positions of the boxes on row $i$. Such placements (in the sense of Remark \ref{R:count}) have been studied before in other contexts and are called Gelfand-Tsetlin patterns, see \cite{oeis}.
The enumeration of all such patterns seems to have been done first in \cite{T}, where it is proven that the number of such patterns is
\[ \frac{\binom{n+1}{2}!\prod_{i=1}^{n-1} i!}{\prod_{i=1}^{n-1} (2i+1)!}.
\]

Think of the $\binom{n+1}{2}$ boxes in the MLQ as chosen in the interval $[0,1)$, and then selecting which boxes end up on which line. By Remark \ref{R:count} we conclude that the $p_{w_0}$ is the number of placements yielding $w_0$ divided by the total number, $\binom{\binom{n+1}{2}}{1,\dots,n}$, of placements. This gives the stated formula.
\end{proof}

We have computed $p_{\pi}$ for all permutations of length $n\le 6$. For $n=2,3$ they are as follows
\renewcommand{\arraystretch}{1.2}
\begin{tabular}{l || c | c | c | c | c | c | c | c }
$\pi$ & 12 & 21 & 123 & 231 & 312 & 132 & 213  & 321 \\
\hline 
$p_{\pi}$ & $\frac{2}{3}$  & $\frac{1}{3}$ & $\frac{25}{60}$ & $\frac{13}{60}$ & $\frac{10}{60}$ & $\frac{5}{60}$ & $\frac{5}{60}$  & $\frac{2}{60}$   \\
\end{tabular}.

\smallskip
Unfortunately we cannot see any obvious general pattern, regardless of whether or not  we mod out by the cyclic action. For example, in general, $p_\pi/p_{w_0}$ is not an integer and the chain is not symmetric: $p_{w_0\pi w_0} \neq p_\pi$ in general. We note, however, that $p_{w_0}$ appears to be the smallest of the probabilities, and $p_{id}$ the largest.


\section{Correlations}\label{S:Correlations}

Even though the stationary probability $p_\pi$ of a given permutation $\pi$ seems difficult to describe in general, the correlation of two adjacent elements seems to exhibit interesting patterns for the continuous TASEP. Let $c_{i,j}(n)=\P(w_a=i,w_{a+1}=j$, for some $a)$, where $a+1$ is modulo $n$. See Table \ref{Table:corr6} for the values of $c_{i,j}(6)$.

\begin{table}
\begin{tabular}{|r||r|r|r|r|r|r|}
\hline
$i\backslash j$& $1$& $2$& $3$& $4$& $5$& $6$\\
\hline
\hline
$1$& $0    $& $      1/2    $& $      1/6    $& $      2/15     $& $     6/55     $& $     1/11 $\\
\hline
$2$ &  $ 1/14      $& $    0     $& $   25/42   $& $       2/15   $& $       6/55     $& $     1/11 $\\
\hline
$3$ &  $5/42       $& $   1/21    $& $      0    $& $     19/30       $& $   6/55     $& $     1/11 $\\
\hline
$4$ &  $16/105     $& $    17/210     $& $     1/30      $& $    0     $& $   106/165    $& $      1/11 $\\
\hline
$5$ &  $ 68/385    $& $     81/770      $& $   19/330      $& $    4/165  $& $        0      $& $    7/11 $\\
\hline
$6$ &  $   37/77      $& $   41/154    $& $     34/231    $& $      5/66       $& $   1/33     $& $     0 $\\
\hline
\end{tabular}
\vskip2mm
\caption{Table showing $ c_{i,j}(n)$, for $n=6$.}\label{Table:corr6}
\end{table}

The most obvious observation is that the columns in the upper right part seem to be constant.
To be more precise:

\begin{conjecture} \label{Conj:column} For every $i+1<j$ we have $c_{i,j}(n)=n/{\binom{n+j}{2}}$.
\end{conjecture}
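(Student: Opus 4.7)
The plan is to use the ``process of the last row'' (\refS{S:last_row}) and to induct on $n - j$, with base case $j = n$. Throughout, I exploit that sampling $\SD$ on $n$ labels amounts to sampling the rows of a continuous MLQ one at a time: row $k$ places $k$ independent uniform positions on $[0,1)$, and labels propagate downward via the pushing procedure, with the unique unmatched box on each row receiving the new label.

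For the base case $j = n$, I condition on rows $n-1$ and $n$, which together contribute $2n-1$ independent uniform points on the circle: $n-1$ ``shooters'' from row $n-1$ (carrying labels distributed as $\SD_{n-1}$) and $n$ ``targets'' from row $n$. The pushing produces the labeling of row $n$, and the unique unmatched target is assigned label $n$. A cycle-lemma argument characterizes this free target as the one at which the cyclic cumulative count (shooter $\mapsto +1$, target $\mapsto -1$) attains its minimum. The claim $c_{i,n}(n) = n/\binom{2n}{2} = 1/(2n-1)$ for $i \in \{1,\dots,n-2\}$ then reduces to showing that the target immediately counterclockwise of the free target carries label $i$ with probability $1/(2n-1)$. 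I plan to establish this by tracing the bully path of the shooter that fills that target and invoking the rotational symmetry of the $2n-1$ uniform points.

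For the inductive step $j < n$, assume the conjecture for $n-1$. Sample $\SD_n$ by first sampling $\SD_{n-1}$ on row $n-1$ and then adjoining row $n$ via uniform positions and the pushing rule. Express $c_{i,j}(n)$ as a weighted sum over configurations of row $n-1$ and the row-$n$ pushing, decomposing according to whether $(i,j)$ are already adjacent on row $n-1$ and whether the pushing preserves or destroys that adjacency (and, symmetrically, whether new adjacency is created from a non-adjacent configuration). This should yield a recursion that, under the inductive hypothesis, evaluates to $n/\binom{n+j}{2}$.

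The main obstacle is the label-symmetry, i.e.\ showing that $c_{i,j}(n)$ does not depend on $i$ for $i \in \{1,\dots,j-2\}$. The difficulty is that the pushing rule is label-order dependent, so naively swapping labels $i$ and $i'$ is not a symmetry of the MLQ distribution. The right approach is probably an involution on MLQs that swaps the bully paths of $i$ and $i'$ while preserving the event of cyclic adjacency to $j$ on the bottom row, but constructing such an involution is delicate. For the few special cases proved in the paper, one can likely sidestep the general symmetry by direct calculation using the explicit Vandermonde density (\refC{C:w0}) and its known derivative variants for the permutations with $(i,j)$ cyclically adjacent, or by exhaustive case analysis for small $n$ or for $j = n$.
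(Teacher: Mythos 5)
First, note that the statement you are trying to prove is left \emph{open} in the paper: it is stated as a conjecture, supported only by the data in Table~\ref{Table:corr6}, and the cases the paper actually proves ($c_{2,1}$, $c_{1,2}$, $c_{n,n-1}$ in Proposition~4.3) all lie \emph{outside} the range $i+1<j$ of Conjecture~\ref{Conj:column}. So there is no proof in the paper to compare against, and the relevant question is whether your proposal itself constitutes a proof. It does not: it is a research plan in which every genuinely hard step is deferred. In the base case $j=n$, the reduction to ``the target immediately counterclockwise of the free target carries label $i$ with probability $1/(2n-1)$'' is fine as a reformulation, but the claimed route to it --- ``rotational symmetry of the $2n-1$ uniform points'' --- cannot work as stated. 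Positional exchangeability would suggest the label adjacent to $n$ is \emph{uniform} over $\{1,\dots,n-1\}$, which is false: the conjecture itself (and the paper's remark) forces $c_{n-1,n}=(n+1)/(2n-1)$, much larger than $1/(2n-1)$. The non-uniformity comes from the correlation between the labels of row $n-1$ (distributed as $\SD_{n-1}$, not exchangeably) and their positions, so any correct argument must engage with exactly the label-symmetry problem you flag at the end as ``delicate'' and unresolved. That acknowledged obstacle is not a side issue; it \emph{is} the content of the conjecture.

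The inductive step has a second, structural gap: the hypothesis you propose to induct on ($c_{i,j}(n-1)$, a finite list of adjacency probabilities) is too weak an input for the recursion you want. Whether adjoining row $n$ preserves or destroys an adjacency on row $n-1$ depends on the joint law of the gap lengths and the labels on row $n-1$, not merely on which labels are adjacent. This is visible already in the paper's own proofs of the easiest off-range cases: computing $c_{2,1}$ and $c_{1,2}$ via the process of the last row requires the full spatial density $g_{21}(q_1,q_2)=2(q_2-q_1)$ from Corollary~\ref{C:w0}, integrated against the binomial probability $\binom{n}{1}(1-y)y^{n-1}$ of the intervening-particle count. An induction of the type you sketch would need, as its hypothesis, control of the full joint position-and-label law (e.g.\ all densities $g_\pi$), and those are themselves known only conjecturally for most $\pi$. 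So as written, neither the base case nor the inductive step can be completed by the methods you name; the proposal leaves the conjecture as open as the paper does.
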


From this, it would also follow that $c_{n-1,n}=(n+1)/(2n-1)$ and $c_{1,2}=4/(n+2)$, since $\sum_i c_{i,j} = \sum_j c_{i,j} = 1$.

It seems the denominator is always a product of small primes. 
The data for $n\le 6$ suggest a conjecture covering all the $c_{i,j}$s.
Our main conjecture for the correlations in the continuous TASEP on a ring is the following.

\begin{conjecture} \label{Conj:Correlation}
For $n\ge 2$, we have the following two-point correlations at stationarity
\[
c_{i,j}(n)=\begin{cases}
\frac{n}{\binom{n+j}{2}}, &\text{if }  i+1<j\le n, \\
\frac{n}{\binom{n+j}{2}}+\frac{ni}{\binom{n+i}{2}}, &\text{if }  i+1=j\le n, \\
\frac{n}{\binom{n+j}{2}}-\frac{n}{\binom{n+i}{2}}, &\text{if }  j<i<n, \\
\frac{n(j+1)}{\binom{n+j}{2}}-\frac{n(j-1)}{\binom{n+j-1}{2}}-\frac{n}{\binom{2n}{2}}, &\text{if }  j<i=n.\\
\end{cases}
\]
\end{conjecture}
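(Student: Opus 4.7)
The plan is to compute $c_{i,j}(n)$ directly from the multiline queue (MLQ) representation of $\SD$ given by \refT{th_FM}. By cyclic translation invariance on the ring, $c_{i,j}(n)$ equals $n\cdot\P(w_1=i,w_2=j)$ when positions are indexed from any fixed reference point; equivalently it is the probability that labels $i$ and $j$ occupy cyclically adjacent positions in that order on the bottom row. The identity $\sum_{j=1}^{n} n/\binom{n+j}{2}=1$, proven by the telescoping $n/\binom{n+j}{2}=2n[1/(n+j-1)-1/(n+j)]$, combined with the four case formulas gives $\sum_j c_{i,j}(n)=1$, which serves as a useful internal cross-check.

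First I would establish case 1 ($i+1<j\le n$) by conditioning on the ordered positions $0\le q_1<\dots<q_n<1$ of the $n$ boxes on the bottom row. Label $j$ appears at position $q_{a+1}$ precisely when the $j$-bully path from row $j$ reaches that box without being preempted by a lower-class bully path, and an analogous statement governs label $i$ at $q_a$. When $i+1<j$, the constraints on rows $1,\dots,i$ (governing the placement of label $i$) and on rows $i+1,\dots,j$ (governing label $j$) decouple, and integrating over the box positions on rows $j+1,\dots,n$ should contribute the factor $n/\binom{n+j}{2}$ arising as the probability that no row $j+1,\dots,n$ box separates $q_a$ from $q_{a+1}$.

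Cases 2 and 3 are then inclusion-exclusion corrections to case 1. In case 2 ($i+1=j$), the $i$-bully path itself can produce the adjacency between labels $i$ and $j$, contributing the extra positive term $ni/\binom{n+i}{2}$. In case 3 ($j<i<n$), label $i$ can appear immediately to the left of label $j$ only via cyclic wrapping of the $i$-bully path past $q_a$, and a sign-reversed inclusion-exclusion in the spirit of the proof of \refT{T:k=1,2} subtracts $n/\binom{n+i}{2}$.

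The main obstacle is case 4 ($j<i=n$), in which wrapping of the top-class bully path interacts with the placement of the unique label-$n$ box on the bottom row (the box not reached by any bully path). The three-term structure
\[
\frac{n(j+1)}{\binom{n+j}{2}}-\frac{n(j-1)}{\binom{n+j-1}{2}}-\frac{n}{\binom{2n}{2}}
\]
signals a nested inclusion-exclusion over configurations in which either an intermediate bully path wraps cyclically or the label-$n$ box lies in a specific subinterval relative to $q_a$ and $q_{a+1}$. I expect this case to require a Lindstr\"om-Gessel-Viennot argument analogous to \refP{P:w0} and \refT{T:k=1,2}, with auxiliary non-intersecting lattice paths encoding the wrapping behavior. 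Verifying case 4 directly for small $n$ and lifting the resulting combinatorial identities to general $n$ seems the most tractable route, possibly short-circuited by the sum-to-one constraint once enough of the other cases are pinned down.
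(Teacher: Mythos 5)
This statement is a \emph{conjecture} in the paper and is not proved there: the authors establish only the three special cases $c_{2,1}$, $c_{1,2}$ and $c_{n,n-1}$ (in the unlabelled proposition following the conjecture), and even the first case of the formula on its own is recorded separately as Conjecture~\ref{Conj:column} and left open. Your proposal does not close this gap; every step that would constitute actual mathematical content is asserted rather than derived. In case 1 the claim that the constraints on rows $1,\dots,i$ and on rows $i+1,\dots,j$ ``decouple'', and that integrating out the boxes on rows $j+1,\dots,n$ ``should contribute'' the factor $n/\binom{n+j}{2}$, is precisely the open problem: the bully paths of classes $1,\dots,i$ and of classes $i+1,\dots,j$ compete for the same boxes on rows $i+1,\dots,j$ and on the bottom row, so there is no evident independence, and no computation is offered that produces $2n/\bigl((n+j)(n+j-1)\bigr)$ from a ``no box separates $q_a$ from $q_{a+1}$'' event (the number of boxes on rows $j+1,\dots,n$ is $\binom{n+1}{2}-\binom{j+1}{2}$, which does not obviously yield that expression). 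Cases 2--4 are then described only as ``inclusion--exclusion corrections'' whose terms are matched post hoc to the conjectured answer, and for case 4 you explicitly defer to ``verifying for small $n$ and lifting,'' which is not a proof method. The identity $\sum_{j=1}^{n} n/\binom{n+j}{2}=1$ is a correct and useful sanity check, but a single linear constraint cannot determine the $n-1$ unknown entries of a row of the correlation table.

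For comparison, the paper's partial results proceed quite differently from what you sketch: $c_{2,1}$ and $c_{1,2}$ are obtained from the process of the last row, using the density $g_{21}(q_1,q_2)=2(q_2-q_1)$ from \refC{C:w0} and integrating the probability that a prescribed number of the $n$ boxes of the new row fall in the arc between the two particles; $c_{n,n-1}$ is obtained by reducing to the relative order of the $3n-3$ boxes on the bottom three rows of a continuous MLQ and counting standard Young tableaux with column lengths $(n-2,n-2,i)$ via the hook-content formula --- not by a Lindstr\"om--Gessel--Viennot argument. If you want to make progress, carrying out your case-1 computation explicitly for small parameters (the authors suggest $c_{3,1}$ as the next tractable instance) would be the honest first step; as written, the proposal contains no complete proof of any case of the conjecture.
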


So, according to the conjecture, for any $j>1$ the most likely position is (directly) to the right of $j-1$. If $j$ is close to $n$, then this will happen roughly half the time.
For small $j$ ($1<j<n/3$) the second most likely position for $j$ is to the right of $n$.

\begin{remark}
According to the conjecture, $n$ is always a factor for the probabilities.
It is tempting to divide with $n$ and say that we are interested in the case when $w_1=i, w_2=j$.
This would, however, not be an equivalent formulation.
The spacing between the particles is not uniform, and hence the distribution of which particle is first in a given $[0,1)$ interval is not uniform.
\end{remark}

We can prove a few cases of this conjecture.
\begin{proposition}
The following two-point correlations hold for any $n\ge 3$.
\begin{enumerate}
\item{} $c_{2,1}=\frac{n}{\binom{n+1}{2}}-\frac{n}{\binom{n+2}{2}}=\frac{4}{(n+1)(n+2)}$
\item{} $c_{1,2}=\frac{n}{\binom{n+2}{2}}+\frac{n}{\binom{n+1}{2}}=\frac{4}{(n+2)}$
\item{} $c_{n,n-1}=\frac{n^2}{\binom{2n-1}{2}}-\frac{n(n-2)}{\binom{2n-2}{2}}-\frac{n}{\binom{2n}{2}}=\frac{3}{(2n-1)(2n-3)}$
\end{enumerate}
\end{proposition}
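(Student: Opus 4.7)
Prove each correlation by reducing the $n$-particle TASEP to a three-class TASEP via coarse-graining of labels, realizing the latter as the bottom row of a three-row continuous MLQ, conditioning on the row-$3$ positions $y_1<\cdots<y_n$, and integrating against the Dirichlet$(1,\dots,1)$ distribution of arc lengths $\ell_j=y_j-y_{j-1}$.

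\textbf{Cases (1) and (2).} Merging labels $3,\dots,n$ into one class gives the $(1,1,n-2)$-MLQ of row sizes $1,2,n$. A direct calculation from the definition of bullying shows that the positions $(u,v)$ of labels $1,2$ on row~$2$ have joint density $f(u,v)=2(u+1-v)$ for $u<v$ and $f(u,v)=2(u-v)$ for $u>v$. With $I_j=(y_{j-1},y_j]$ the row-$3$ arcs, a case analysis of the row-$2$-to-row-$3$ bullying shows that ``label~$1$ directly left of label~$2$'' on row~$3$ is exactly the event $\{u\in I_j,\ v\in I_j\cup I_{j+1}\}$ for some~$j$, while ``label~$2$ directly left of label~$1$'' is $\{u\in I_j,\ v\in I_{j-1}\}$. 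Integrating $f$ over these regions yields the conditional probabilities as polynomials in $(\ell_j)$; taking the Dirichlet expectation via
\[
E[\ell_j^2]=\tfrac{2}{n(n+1)},\quad E[\ell_j\ell_{j+1}]=\tfrac{1}{n(n+1)},\quad E[\ell_j^2\ell_{j+1}]=\tfrac{2}{n(n+1)(n+2)}
\]
then produces $c_{1,2}(n)=\tfrac{4}{n+2}$ and $c_{2,1}(n)=\tfrac{4}{(n+1)(n+2)}$.

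\textbf{Case (3).} Merge labels $1,\dots,n-2$ into one class, giving the $(n-2,1,1)$-MLQ of row sizes $n-2,n-1,n$. Write $u^*$ for the class-$2$ row-$2$ box, i.e.\ the unique row-$2$ box not bullied by any row-$1$ box. After conditioning on row~$3$, the event ``label~$n$ directly left of label~$n-1$'' is equivalent to the conjunction: the two row-$3$ boxes left unlabeled by the class-$1$ row-$2$ boxes are cyclically adjacent, say $\{y_{j-1},y_j\}$, and $u^*\in I_j$. By cyclic symmetry it suffices to evaluate $n$ times the probability of the instance $j=1$.

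The main obstacle is that $u^*$ and the set of class-$1$ row-$2$ positions are not independent: both are determined by the joint $(n-2,1)$-bullying from row~$1$ into row~$2$. I plan to handle this by further conditioning on the vectors $(k_j)$ and $(\kappa_j)$ counting how many row-$1$ and row-$2$ boxes, respectively, fall in each arc $I_j$, together with the identity of the row-$2$ box equal to $u^*$. The requirement that the class-$1$ row-$2$ boxes bully exactly $\{y_2,\dots,y_{n-1}\}$ then becomes a cyclic parking-function constraint on these counts. Paralleling the type~1/2/3 decomposition in the proof of \refT{T:k=1,2}, I expect the sum over admissible counts to split into three contributions that, after Dirichlet integration, produce the three terms $\tfrac{n^2}{\binom{2n-1}{2}}$, $-\tfrac{n(n-2)}{\binom{2n-2}{2}}$, $-\tfrac{n}{\binom{2n}{2}}$ of the formula, yielding $c_{n,n-1}(n)=3/((2n-1)(2n-3))$.
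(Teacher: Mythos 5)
Your parts (1) and (2) are correct, though organized differently from the paper. The paper runs the process of the last row: since classes $3,\dots,n$ do not affect the relative position of classes $1$ and $2$, the upper row's gap has the density $2(1-y)$ coming from \refC{C:w0}, and ``$2$ directly left of $1$'' happens iff exactly one of the $n$ new uniform boxes falls in the gap, giving $c_{2,1}=\int_0^1 2n(1-y)^2y^{n-1}\,dy$ (with a three-case analogue for $c_{1,2}$). You instead take the three-row $(1,1,n-2)$-MLQ, re-derive the row-two density $f(u,v)$ directly from the bullying rule, condition on the bottom-row boxes, and integrate over unions of arcs against Dirichlet moments. I checked your regions and moments: the conditional probability of ``$2$ directly left of $1$'' is $\sum_j(\ell_{j-1}^2\ell_j+\ell_{j-1}\ell_j^2)$, with expectation $4/((n+1)(n+2))$, and that of ``$1$ directly left of $2$'' is $\sum_j(\ell_j^2+2\ell_j\ell_{j+1}-\ell_j^2\ell_{j+1}-\ell_j\ell_{j+1}^2)$, with expectation $4/(n+2)$; both match the claimed values. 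Your version is more self-contained (it does not quote \refC{C:w0}), at the cost of a longer computation; the two arguments rest on the same lumping idea.

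Part (3) is where the genuine gap is: what you have written is a plan whose decisive step is deferred. The reduction itself is sound, and is in fact the same reduction the paper makes (merging $1,\dots,n-2$ is exactly the paper's observation that only the bottom three rows, of sizes $n-2$, $n-1$, $n$, matter), and your reformulation of the event --- the two row-3 boxes not bullied from row 2 must be cyclically adjacent, say $y_{j-1},y_j$, with $u^*\in I_j$ --- is correct. But the whole difficulty of the statement lives in the joint law of $u^*$ and the set of class-1 row-2 boxes, and this you do not resolve: conditioning on the arc counts $(k_j),(\kappa_j)$ does not determine which row-2 box is $u^*$ (that depends on the full cyclic interleaving of row-1 boxes among the row-2 boxes, not on counts per arc), so the conditional probability you need is not computable from your conditioning data without substantial further work. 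Moreover, your expectation that the sum will split into the three displayed terms is unsupported; those terms arise from specializing the general conjecture, and even the paper's proof never produces them separately --- it proves the simplified value directly. Concretely, the paper handles the dependence by forgetting positions entirely and counting relative orders: the event forces a chain of inequalities among the $3n-3$ bottom boxes, the admissible orders are standard Young tableaux with column lengths $(n-2,n-2,i)$, and the hook-content formula together with the identity $\sum_{i=0}^{n-2}\binom{n-2}{i}\big/\binom{3n-4}{n-i}=3/((2n-1)(2n-3))$ (which the authors evaluated with MAPLE) finishes the proof. Your route would need its own exact enumeration replacing this SYT count, plus a closing identity of the same kind; until then, part (3) remains unproved.
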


\begin{proof}
For the first two statements, we will use the process of the last row on $n$ boxes labeled $1,\dots,n$. Assume that the particles of classes 2 and 1 are positioned at $q_2$ and $q_1$ respectively in the preceding row.
By rotation we may assume that $q_2<q_1$.
The only way to obtain a 2 followed by a 1 after the process of the last row is to have exactly one particle in the interval $[q_2,q_1]$.
This is due to the fact that the class 1 particle will land at the first available position and after that the class 2 particle will do the same.
Let $y=1-(q_1-q_2)$.
We know by \refT{T:w0} that $g_{21}(q_1,q_2)=2(q_2-q_1) = 2(1-y)$.
If we think of this as the limit of the stationary distribution of TASEP, it is clear that the particles of classes higher than 2 will not influence the relative positions of 2 and 1. 
The probability that exactly one of the $n$ particles lies in the interval $[q_2,q_1]$ is $\binom{n}{1}(1-y)y^{n-1}$.
We thus obtain
\[c_{2,1}=\P(\text{2 followed by 1})=\int_0^1 2n(1-y)^2y^{n-1} dy=\frac{4}{(n+1)(n+2)}.
\]

The computation of $c_{1,2}$ is similar.
This time, there are three possibilities of getting a 1 followed by a 2: either there are no particles in the interval $[q_2,q_1]$, or all the particles are in this interval, or all particles but one are in the interval.
Summing these three integrals gives the desired formula. Note that the method used above could in principle be extended to $c_{i,j}$ for $i,j\le x$, if we know $g_\pi$ for $\pi\in S_x$, but it quickly becomes intractable. 

The computation of $c_{n,n-1}$ is more involved. We use continuous multiline queues. As discussed in Remark \ref{R:count}, to determine $c_{n,n-1}$, it suffices to count the number of placements $(a_{ij})$ of the boxes in the multiline queue that give a bottom row starting with $n, n-1$ (by rotation we may assume that the leftmost box is labelled $n$). We will refer to such placements as {\it valid}. So to compute $c_{n,n-1}$, we will compute the ratio of valid placements to all placements.

Note that the values of $a_{ij}$ for $i \leq n-3$ do not influence whether an assignment is valid or not. Therefore we can instead count the number of placements $(b_{ij})$ for $n-2 \leq i \leq n, j \leq i$, where $b_{ij}$ is the number of boxes to the right of box $(i,j)$ \emph{in the bottom three rows}, including the box itself. Recall that $j$ is the number of the box counting from the right. Such a placement $(b_{ij})$ is valid if it comes from a valid placement $(a_{ij})$. 

It is easy to check that being valid amounts to the following systems of inequalities. 

\newcommand{\rle}[1]{\text{\begin{turn}{#1}$<$\end{turn}}} 


\[
\begin{array}{ccc}
  b_{n,1}   & < \dots  < & b_{n,n-2}     \\
  \rle{270} &            & \rle{270}    \\
  b_{n-1,1} & < \dots  < & b_{n-1,n-2}  \\
  \rle{270} &            & \rle{270}    \\
  b_{n-2,1} & < \ldots < & b_{n-2,n-2}  \\
\end{array} \text{\  and\  }
\begin{array}{ccccc}
    &&&&  \\
    &&&& \\
 b_{n-1,n-2} & < & b_{n,n-1}&& \\
 \rle{270}   &&\rle{270}&& \\
 b_{n-2,n-2} & < & b_{n-1,n-1}  &< &b_{n,n} \\
\end{array}
\]

The left system of inequalities comes from the fact that there is no wrapping in the bottom three rows and the second set of inequalities comes from the word starting with $n,n-1\dots$.
From the inequalities it follows directly that $b_{n,n}=3n-3$ and $b_{n-1,n-1}=3n-4$. If $b_{n,n-1}=2n+i-1$ then $b_{n-2,j}=2n+j-3$ for all $i< j\le n-2$. The remaining entries, which form the set $\{1,\dots,2n-3+i\}$, form a standard Young tableau (transpose the figure of the inequalities above) with columns of length $n-2,n-2,i$.
Let $SYT_{n-2,n-2,i}$ denote the number of standard Young tableaux of this type. We refer to \cite{EC2} for basic facts about standard Young tableaux including the hook-length formula from which one may deduce that $SYT_{n-2,n-2,i}=\frac{(2n-4+i)!(n-i)(n-i-1)}{i!n!(n-1)!}$. The rotation gives a factor of $3n-3$ and we get

\begin{equation*}
c_{n,n-1}=\frac{(3n-3)\sum_{i=0}^{n-2} SYT_{n-2,n-2,i}}{\binom{3n-3}{n,n-1,n-2}}=\frac{(n-2)!\sum_{i=0}^{n-2} \frac{(2n-4+i)!(n-i)(n-i-1)}{i!}} {(3n-4)!}.
\end{equation*}

Now, expand $(n-i)(n-i-1)=n(n-1)-(2n-2)i+i(i-1)$ and use $\sum_{i=0}^{x} \binom{y+j}{j}=\binom{y+x+1}{x}$ for $j=i,i-1$ and $i-2$. Collect all terms and we get 
$\frac{3}{(2n-1)(2n-3)}$, as desired. 
\end{proof}

\section{Correlation function for initial decreasing sequence}\label{S:initial}

In this section, we prove a formula for the probability that a word sampled from the discrete TASEP starts with a given decreasing word. By remarkable coincidence it is the same formula as in \refP{P:w0}. Fix the length $N$ of the ring and let $\mbf{m} = (\underbrace{1,\dots,1}_{N})$.
Suppose $u$ is picked from the stationary distribution of the $\mbf{m}$-TASEP. 
We now ask, what is the probability that $u$ has some fixed word as a prefix? In general the answer appears to be complicated (see \cite{AL2} for prefixes of length at most $3$).
However in the case of a word of the type $x_n x_{n-1} \dots x_2$ where $x_n > x_{n-1} > \dots > x_2$, we show that there is a simple answer to this question. This theorem answers \cite[Conjecture 8.1]{AL2}.

\begin{theorem} \label{T:initial} Suppose $u$ is picked from the stationary distribution of the $\mbf{m}$-TASEP. 
Fix $N \geq x_n > x_{n-1} > \dots > x_2 \geq 1$. Then, the probability $f_\pi(x_n, \dots, x_2)$ ($\pi$ for "permutation") that for some word $v$, $u = x_n x_{n-1} \dots x_2 v$, is 
\[
	\frac{1}{\prod_{i=1}^{n-1} {N\choose i}}
	\det\left[{x_{i+1} \choose j-1}\right]_{i,j=1}^{n-1}.
\] 
\end{theorem} 

\begin{remark}\label{R:5} One way to think about this theorem is to consider a state of the TASEP as a permutation matrix of size $N$ with 1's in positions $(i,\pi(i))$. \refT{T:initial}  gives the probability that the first $n-1$ rows have the 1's in positions $(1,x_n),\dots, (n-1, x_2)$. (By cyclic invariance the same is true for any $n-1$ consecutive rows.) On the other hand, \refP{P:w0} states that the probability of the first \emph{columns} of the state matrix having 1's in positions $(x_n,n-1), \dots ,(x_2, 1)$ is exactly the same as the expression in \refT{T:initial}. This is because the probability of the position of the smallest labels does not change if we change all labels $n,\dots,N$ to just $n$. This interesting equality does not carry over to other patterns in general.
\end{remark}

\begin{proof}

For a vector $(m_1, \dots, m_n)$ of non-negative integers with sum $N$, write $N_i = (M_{i-1}, M_i]$ where $M_0= 0$ and 
$M_i = \sum_{j \leq i} m_j$, $1\le i \le n$ so that $[N] = \{1,2,\dots, N\}$ is the disjoint union of the $N_i$'s. We also assume $m_i\ge 1, i\ge 1$, so only the first interval may be empty.

Now, write $f_w(m_n, m_{n-1}, \dots, m_1)$ ($w$ for "word") for the probability that a TASEP distributed word of type $\mbf{m}$ starts with the word $n(n-1)\dots 32$.

Clearly,
\begin{equation}
\label{fw_in_terms_of_fpi}
f_w(m_n, \dots, m_1) = \sum_{x_n \in N_n} \dots \sum_{x_2\in N_2} f_\pi(x_n, \dots, x_2).
\end{equation}

For given $n,N$, the set of possible vectors $(m_n, \dots, m_1)$ and the set of possible vectors $(x_n, \dots, x_2)$ both have size $\binom{N}{n-1}$ and a bijection is given by setting $x_i=M_{i-1} +1$. The relation \eqref{fw_in_terms_of_fpi}
thus amounts to multiplication with a quadratic matrix. Ordering $(m_n, \dots, m_1)$ by lexicographic order and 
$(x_n, \dots, x_2)$ by backward revlex order the matrix will be lower triangular with 1's on the diagonal and thus invertible. So, for a fixed $x_n, \dots, x_2$, the value of $f_\pi(x_n, \dots, x_2)$ can be computed from the values of all $f_w(m_n, \dots,m_1)$.
Thus, to prove the theorem, it is sufficient to show that equation \eqref{fw_in_terms_of_fpi} is satisfied (for all $\mbf{m}$) when substituting the claimed formula for $f_\pi$, that is,

\begin{equation}
\label{fw_toprove}
f_w(m_n, \dots, m_1) =
\frac{1}{\prod_{i=1}^{n-1} {N\choose i}}
\sum_{x_n\in N_n} \dots \sum_{x_2 \in N_2} \det\left[{x_{i+1} \choose j-1}\right]_{i,j=1}^{n-1}.
\end{equation}

We will now make a series of manipulations to the right hand side of \eqref{fw_toprove}.
First, note that $x_{i+1}$ only occurs in row $i$ and use the multilinearity of the determinant to move each sum inside its respective row. We get

\[
\frac{1}{\prod_{i=1}^{n-1} {N\choose i}}
\det\left[\sum_{x_{i+1}\in N_{i+1}} {x_{i+1} \choose j-1}\right]_{i,j=1}^{n-1} =
\]\[
\frac{1}{\prod_{i=1}^{n-1} {N\choose i}}
\det\left[{M_{i+1}+1 \choose j} - {M_{i}+1 \choose j}\right]_{i,j=1}^{n-1}.
\]

%
Letting each row be replaced with the sum of the rows (weakly) above we obtain:
\[
	\det\left[{M_{i+1}+1\choose j}-{M_{i}+1\choose j}\right]_{i,j=1}^{n-1} =
	\det\left[{M_{i+1}+1\choose j}-{M_1+1\choose j}\right]_{i,j=1}^{n-1}.\]

If we in the $n\times n$ matrix $\left[{M_i+1\choose j-1}\right]_{i,j=1}^{n}$ subtract the top row from every other row, 
and expand along the first column we get the identity 

\[
	\det\left[{M_i+1\choose j-1}\right]_{i,j=1}^{n}=\det\left[{M_{i+1}+1\choose j}-{M_1+1\choose j}\right]_{i,j=1}^{n-1}.\]

Let $F_w(m_n, m_{n-1}, \dots, m_1) = \prod_{i=1} ^n {N\choose M_i} \cdot f_w(m_n,m_{n-1}, \dots, m_1)$ be the number of multi-line queues whose bottom row starts with $n(n-1)\dots 32$ and has type $\mbf{m}$. 

To prove equation \eqref{fw_toprove}, we need to show that  

\[
	F_w(m_n, m_{n-1}, \dots, m_1) =
	\prod_{i=1}^n \frac{{N\choose M_i}}{{N\choose i-1}}
	\det\left[{M_i+1\choose j-1}\right]_{i,j=1}^n.
\]

Move the product in	the numerator into the rows of the matrix and the product in the denominator into the columns. Simplify the resulting expression $\frac{{N\choose M_i}{M_i+1 \choose j-1}}{{N\choose j-1}} = \frac{M_i+1}{N+2-j}{N+2-j\choose	M_i+2-j}$ and move the factors depending only on $i$ respectively $j$ out through the rows and columns again. Recall that $M_n=N$.  We get 

\[
\prod_{i=1}^n \frac{M_i +1}{N+2-i}
\det\left[ {N+2-j\choose M_i +2-j} \right]_ {i,j=1} ^n = 
\]\[
\prod_{i=1}^{n-1} \frac{M_i +1}{N+1-i}
\det\left[ {N+2-j\choose M_i +2-j} \right]_ {i,j=1} ^n.
\]

This matrix has bottom row with all ones. We replace column $j$ in this matrix with the difference of column $j$ and 
column $j+1$. Then we use ${N+2-j\choose M_i +2-j} - {N+1-j\choose M_i +1-j} = {N+1-j\choose M_i +2-j}$ and expand 
the determinant along the bottom row which yields

\begin{equation}\label{E:2}
\prod_{i=1}^{n-1} \frac{M_i +1}{N+1-i}
\det\left[ {N+1-j\choose M_i +2-j} \right]_ {i,j=1} ^{n-1}.
\end{equation}

It remains to show that the last expression equals $F_w(m_n, \dots, m_1)$. Consider a multiline queue counted by $F_w(m_n, \dots,m_1)$.  It has $n-1$ rows, indexed $1, \dots, n-1$ (we use matrix notation: $(i,j)$ refers to row $i$ and column $j$). In rows $1, 2, \dots, (n-1)$, there are $m_1, m_1+m_2, \dots, N - m_n$ particles.
It is easy to see that the sites in the "triangle" $(n-1, 1), \dots, (n-1, n-2)$; $(n-2, 2), \dots, (n-2, n-2)$; $\dots$; $(2,n-2)$ are filled with boxes, and that a box in such a site $(i,j)$ is labeled $n-j$. Recall that we number the columns from $0$ to $N-1$. We now consider the boxes in the multiline queue that are not part of this triangle. Since the word starts with the descending sequence, no bully paths can wrap. 

Denote by $z_{i,j}$ the distance from the right end of the multiline queue of the $j$th box from the right in the $(n-i)$th row. 
That is, if the $j$th particle from the right in the $(n-i)$th row is at $(i, r)$, we let $z_{i,j} = N-r$.
We only include $(i,j)$ referring to boxes not in the triangle mentioned above.
The numbers $z_{i,j}$ must form a semi-standard Young tableau (SSYT) of shape $\lambda$ where the conjugate partition is $\lambda'_i = M_{n-i} - (n-i-1)$ for $1 \leq i \leq n-1$. Moreover, this is a bijection from the MLQs counted by $F_w(m_n, \dots, m_1)$ to SSYT of shape $\lambda$ with entries in $[t]$, where $t = N-n+1$.

\begin{example}
Here, $N = 13$, $n = 5$, $\mbf{m} = (2, 2, 2, 3)$, $(M_1, \dots, M_5) = (2, 4, 6, 9, 13), t = 9, \lambda' = (6,4,3,2)$.
The multiline queue

\scalebox{0.9}{
\begin{tikzpicture} 
\node at (1,1){$5$};
\node[circle,draw=black] at (2,1){$4$}; 
\node[circle,draw=black] at (3,1){$3$}; 
\node[circle,draw=black] at (4,1){$2$};
\node[circle,draw=black] at (5,1){$4$};
\node at (6,1){$5$};
\node at (7,1){$5$};
\node[circle,draw=black] at (8,1){$2$};
\node[circle,draw=black] at (9,1){$1$};
\node at (10,1){$5$};
\node[circle,draw=black] at (11,1){$3$};
\node[circle,draw=black] at (12,1){$4$};
\node[circle,draw=black] at (13,1){$1$};
\draw(0.5,1.5)--(13.5,1.5);
\node[circle,draw=black] at (3,2) {$3$};
\node[circle,draw=black] at (4,2) {$2$};
\node[circle,draw=black] at (7,2) {$2$};
\node[circle,draw=black] at (9,2) {$1$};
\node[circle,draw=black] at (11,2){$3$};
\node[circle,draw=black] at (13,2){$1$};
\draw(0.5,2.5)--(13.5,2.5);
\node[circle,draw=black] at (4,3) {$2$};
\node[circle,draw=black] at (5,3) {$2$};
\node[circle,draw=black] at (8,3) {$1$};
\node[circle,draw=black] at (12,3){$1$};
\draw(0.5,3.5)--(13.5,3.5);
\node[circle,draw=black] at (6,4) {$1$};
\node[circle,draw=black] at (9,4){$1$};
\end{tikzpicture}
}

corresponds to the tableau  

${\scriptsize \Yvcentermath1\young (1125,2368,359,57,6,9)}$. 

 \smallskip
 So, for example, the first row of the SSYT describes the positions from the right of the rightmost box in the four lines in the queue.
\end{example}

Now we are in a position to finish our argument. By the definition of Schur function, the number of SSYT of shape $\lambda$ with entries in $[t]$ is $s_\lambda(1^t)$. Recall the hook-content formula and the Jacobi-Trudi identity, see  e.g. \cite{EC2}.

\begin{lemma}
The number of SSYT of shape $\lambda$ and entries in $[t]$ equals
\[
\prod_{r\in\lambda} \frac{t+c_\lambda(r)}{h_\lambda(r)} = s_\lambda(1^t) = \det\left[{t \choose \lambda'_i-i+j}\right] = \det\left[{t+j-1\choose \lambda'_i - i +j}\right],
\]
where for a box $r = (i,j)$ in the Ferrers diagram of $\lambda$, we let $c_\lambda(r) = j-i$ and $h_\lambda(r) = \lambda_i + \lambda'_j - i - j + 1$.
\end{lemma}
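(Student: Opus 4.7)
The plan is to prove the three equalities of the lemma in turn, all of which are classical and can be found in Stanley's \cite{EC2}, but I will sketch the underlying ideas so that the chain of equalities is transparent.

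The first equality, that the count of SSYT of shape $\lambda$ with entries in $[t]$ equals $s_\lambda(1^t)$, is immediate from the combinatorial definition
\[
s_\lambda(x_1, \dots, x_t) \;=\; \sum_T \prod_r x_{T(r)},
\]
where the sum ranges over SSYT $T$ of shape $\lambda$ with entries in $\{1,\dots,t\}$; specializing $x_i = 1$ for all $i$ counts the tableaux directly. So this step is just a matter of invoking the definition.

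For the middle equality (the hook-content formula), my plan is to cite Stanley's theorem directly, since rederiving it would take us far afield. The usual route goes through the principal specialization $s_\lambda(1, q, q^2, \dots, q^{t-1})$, whose closed-form evaluation involves the hook lengths $h_\lambda(r)$ and the contents $c_\lambda(r) = j-i$; setting $q = 1$ and clearing factors yields exactly $\prod_r (t + c_\lambda(r))/h_\lambda(r)$. This is the step I expect to be the main obstacle if one wanted a fully self-contained proof, but within the scope of this paper it is standard and invoked as a black box.

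For the two determinantal forms, I would apply the dual (Naegelsbach) Jacobi--Trudi identity
\[
s_\lambda \;=\; \det\bigl(e_{\lambda'_i - i + j}\bigr)_{i,j=1}^{n},
\]
where $n$ is the number of columns of $\lambda$ and $e_k$ is the $k$th elementary symmetric function. Since $e_k(1^t)$ counts $k$-subsets of $[t]$, one has $e_k(1^t) = \binom{t}{k}$, giving the first determinantal expression $\det\binom{t}{\lambda'_i - i + j}$ immediately.

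Finally, to pass from $\det\binom{t}{\lambda'_i - i + j}$ to $\det\binom{t+j-1}{\lambda'_i - i + j}$, I would perform column operations using Pascal's rule $\binom{m}{k} = \binom{m-1}{k} + \binom{m-1}{k-1}$. Replacing column $j$ by column $j$ plus column $j-1$, applied iteratively from left to right, turns each entry $\binom{t}{\lambda'_i - i + j}$ into $\binom{t+j-1}{\lambda'_i - i + j}$ without changing the determinant. This concludes the proof of the lemma.
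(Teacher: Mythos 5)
Your proposal is correct and follows essentially the same route as the paper, which simply notes that the first two equalities are well-known (the combinatorial definition of $s_\lambda$, the hook-content formula, and the dual Jacobi--Trudi identity with $e_k(1^t)=\binom{t}{k}$) and that the last equality follows by column operations via Pascal's rule. Your version just spells out the standard references and manipulations in slightly more detail.
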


The first two equalities are well-known, and the last is easily obtained by column operations. 

So, by the lemma, 
\[ F_w(m_n,\dots,m_1)= 	\prod_{r\in\lambda} \frac{N-n+1 + c_\lambda(r)}{h_\lambda(r)}.
\]

Now let $s = t+1=N+1-n$ and $\mu'_i = \lambda'_i + 1$ for $1 \leq i \leq n-1$. We think of $\mu$ as $\lambda$ with a row added on top. It is easy to see that our determinant in \eqref{E:2} equals (after reversing the numbering of rows and columns)
\[\prod_{i=1}^{n-1} \frac{M_i +1}{N+1-i} 
	\det\left[ {s+j-1\choose \mu'_i -i +j} \right]_ {i,j=1} ^{n-1},
\]

which by the lemma (temporarily letting $\mu$ and $s$ play the roles of $\lambda$ and $t$) equals 

\[
	\prod_{i=1}^{n-1} \frac{M_i +1}{N+1-i} 
	\prod_{r\in\mu} \frac{N-n+2 + c_\mu(r)}{h_\mu(r)}
\]

To prove that $F_w(m_n, \dots, m_1)$ equals the expression \eqref{E:2}, it thus remains to show that
\[\prod_{i=1}^{n-1} \frac{M_i +1}{N+1-i} 
	\prod_{r\in\mu} \frac{N-n+2 + c_\mu(r)}{h_\mu(r)} =
	\prod_{r\in\lambda} \frac{N-n+1 + c_\lambda(r)}{h_\lambda(r)},
\]
or, in terms of $\lambda'$, 
\[\prod_{r\in\lambda} \frac{N-n+1 + c_\lambda(c)}{h_\lambda(c)} =
	\prod_{i=1}^{n-1} \frac{\lambda'_i+n-i}{N+1-i} 
	\prod_{r\in\mu} \frac{N-n+2 + c_\mu(c)}{h_\mu(c)}.
\]

This is easily checked.
\end{proof}

\end{document}